\newcommand{\R}{\mathbb{R}}
\newtheorem{theorem}{Theorem}
\newtheorem{corollary}[theorem]{Corollary}
\newtheorem{lemma}[theorem]{Lemma}
\newtheorem{proposition}[theorem]{Proposition}
\newtheorem{remark}[theorem]{Remark}
\newtheorem{example}[theorem]{Example}
\newtheorem{definition}[theorem]{Definition}
\newcommand{\ie}{i.e.,\,}
\newcommand{\eg}{e.g.,\,}
\newcommand{\Rsp}{\mathbb{R}}
\newcommand{\Prob}{\mathcal{P}}
\newcommand{\Class}{\mathcal{C}}
\newcommand{\one}{\mathbf{1}}
\newcommand{\dd}{\mathrm{d}}
\newcommand{\id}{\mathrm{id}}
\newcommand{\Wass}{\mathrm{W}}
\newcommand{\LL}{\mathrm{L}} 
\newcommand{\abs}[1]{\left|#1\right|}
\newcommand{\nr}[1]{\|#1\|}
\newcommand{\Leb}{\mathrm{Leb}}
\renewcommand{\bar}{\overline}
	\title{Robust risk management via multi-marginal optimal transport \footnote{The work of BP was partially supported by  National Sciences and Engineering Research Council of Canada Discovery Grant number  04658-2018.  He is also pleased to acknowledge the generous  hospitality of the Institut de Mathématiques d'Orsay, Université Paris-Sud during his stay in November of 2019 as a missionaire scientifique invité, when this work was partially completed. L.N. was supported by a public grant as part of the Investissement d'avenir project, reference ANR-11-LABX-0056-LMH, LabEx LMH,  from the CNRS PEPS JCJC (2022) and H-CODE.}}
	\author{Hamza Ennaji\thanks{Normandie Universit\'e, ENSICAEN, UNICAEN, CNRS, GREYC, France. \texttt{hamza.ennaji@unicaen.fr}}
	\and
	Quentin Mérigot\thanks{Université Paris-Saclay, CNRS, Laboratoire de mathématiques d’Orsay, 91405, Orsay, France.  \texttt{quentin.merigot@universite-paris-saclay.fr}}
	\and
                 Luca Nenna\thanks{Université Paris-Saclay, CNRS, Laboratoire de mathématiques d’Orsay, 91405, Orsay, France.  \texttt{luca.nenna@universite-paris-saclay.fr}}  
\and 
                 Brendan Pass\thanks{Department of Mathematical and Statistical Sciences, 632 CAB, University of Alberta, Edmonton, Alberta, Canada, T6G 2G1  \texttt{pass@ualberta.ca}} }
\begin{document}

%
	\maketitle
	
	\begin{abstract}
		We study the problem of maximizing a spectral risk measure of a given output function which depends on several underlying variables, whose individual distributions are known but whose joint distribution is not.  We establish and exploit an equivalence between this problem and a multi-marginal optimal transport problem.  We use this reformulation to establish explicit, closed form solutions when the underlying variables are one dimensional, for a large class of output functions.  For higher dimensional underlying variables, we identify conditions on the output function and marginal distributions under which solutions concentrate on graphs over the first variable and are unique, and, for general output functions, we find upper bounds on the dimension of the support of the solution.  We also establish a stability result on the maximal value and maximizing joint distributions when the output function, marginal distributions and spectral function are perturbed; in addition, when the variables one dimensional, we show that the optimal value exhibits Lipschitz dependence on the marginal distributions for a certain class of output functions.  Finally, we show that the equivalence to a multi-marginal optimal transport problem extends to maximal correlation measures of multi-dimensional risks; in this setting, we again establish conditions under which the solution concentrates on a graph over the first marginal.
	\end{abstract}
	\tableofcontents	
	
	\section{Introduction}
	In a variety of problems in operations research, a variable of interest $b=b(x_1,x_2,\dotsc,x_d)$ depends on 
	several underlying random variables, whose individual distributions are known (or can be estimated) but whose joint distribution is not.  A natural example arises in finance, when one considers the payout of a derivative depending on several underlying assets.  An estimate of the distribution of the asset values themselves can often be inferred from the prices of vanilla call and put options for a wide range of    
	strike prices; since these options are widely traded, their prices are readily available.  However, estimating the joint distribution would require prices of a wide range of derivatives with payouts depending on all the variables, which are typically much scarcer.  Other examples arise when the variables represent parameters in a physical system whose individual distributions can be estimated empirically or through modeling (or a combination of both) but whose dependence structure cannot; an example of this flavour, originating in \cite{iooss2015review}, in which the output $b$ represents the simulated height of a river at risk of flooding, and the underlying variables include various design parameters and climate dependent factors can be found in Section \ref{subsect: river example} below.  A third example comes from election projections in political science, where $b$ might represent the outcome of an election and the $x_i$ vote shares in different regions, whose individual distributions can be modelled from polling data. 
	
	Methods used in risk management to evaluate the resulting aggregate level of risk naturally depend on the distribution of the output variable $b$, and therefore, in turn, on the joint distribution of the $x_i$.  A natural problem is therefore to determine bounds, or worst-case scenarios, on these risk measures; that is, to maximize a given risk  measure over all possible joint distributions of the $x_i$ with known marginal distributions.  
	Problems such as this have received extensive attention within the financial risk management community; see, for example, \cite{Ruschendorf2013}.  In the simplest of these problems, the underlying variables $x_i$ are typically real valued and the output variable $b$ is often assumed to have a particular structure (in many cases, it is a weighted sum of the $x_i$, reflecting the value of a portfolio built out of underlying assets with values $x_i$, or a function of this weighted sum).  In these cases, explicit solutions for the maximizing couplings of the $x_i$ can sometimes be obtained (see, for example, \cite{Puccetti2013,PuccettiRuschendorf13,WangWang11}).

	For more general output functions $b$, and possibly multi-dimensional underlying variables $x_i$, much less is known about the dependence structure of the maximizing joint distributions. A recent paper of Ghossoub-Hall-Saunders \cite{ghossoub2020maximum} studies this more general setting systematically, allowing the underlying variables to take values in very general spaces and the output $b$ to take a very general form, and focuses on \emph{spectral risk measures} (see Definition \ref{def: spectral risk measure} below). They observe that the resulting problem is a generalization of optimal transport (see Section 2 for a brief overview of the optimal transport problem); in fact, in the simplest case, when the spectral function is identically equal to one and the number $d$ of underlying variables is $2$, the problem is exactly a classical optimal transport problem with surplus function $b$.   For more general spectral risk measures, the problem has the  \emph{same constraint set} as the optimal transport problem, but a more general, \emph{non-linear objective functional}; they adapt the duality theory of optimal transport to this setting and establish results on the stability with respect to perturbations of $b$ and the marginal distributions.  Although the analysis in \cite{ghossoub2020maximum} focused on the $d=2$ case, they note that their results can be extended to the $d \geq 3$ setting, in which case maximizing spectral risks becomes a generalization of the multi-marginal optimal transport problem (see Section \ref{section:prelim} for a brief overview of this problem).  As in the two marginal case, one maximizes a concave objective function over the set of couplings of the given marginals -- when the spectral function is constant, one obtains multi-marginal optimal transport, a linear maximization.
	
	The main purpose of this paper is to establish and exploit a simpler, but equivalent, reformulation of this problem. Specifically, we show that for \emph{any} spectral risk measure, the maximization can in fact be formulated as a traditional multi-marginal optimal transport problem with $d+1$ marginals: the given marginals distributions of the $x_i$ as well as another distribution arising from the particular form of the spectral function.  Although this formulation slightly increases the underlying dimension, it makes the maximization problem linear and much more tractable -- indeed, the results and techniques, both theoretical and computational, in the substantial  literature on multi-marginal optimal transport become directly applicable.

	Though the structure of solutions to multi-marginal optimal transport problems is in general a notoriously delicate issue (see \cite{PassSurvey} for an overview) there are many important cases when the problem is well understood.  As a consequence of our result, when the underlying variables $x_i$ are all one dimensional, we derive an explicit characterization of solutions for  a substantial class of output functions $b$, through a careful refinement of the existing theory of multi-marginal optimal transport on one dimensional ambient spaces.  For $b$ falling outside this class, explicit solutions are likely generally unattainable; however, our formulation of the problem can potentially facilitate the use of a very broad range of computational methods for optimal transport to approximate solutions numerically (see Section 8.1 in \cite{Benamou2021} and the references therein).
	
	For underlying variables in higher dimensional spaces, explicit solutions are generally not possible. However, we show that known techniques from multi-marginal optimal transport can be adapted to identify conditions under which the solution concentrates on a graph over the first variable; 
	solutions having this structure, commonly referred to as \emph{Monge solutions} in the optimal transport literature, are analogous to the well known comonotone couplings which often maximize spectral risk measures in the one dimensional case, since they essentially assert that knowing the value of the first variable completely determines the values of the others. On the other hand, for many output functions $b$, conditions ensuring this structure of a solution do not hold; in this case, we demonstrate that available techniques can still be used bound the dimension of the support of the optimizer. 	In addition to being of theoretical interest, this fact is potentially important in future work on the computation of solutions.\footnote{In general, the joint distributions of interest are probability measures on an $n\cdot d$ dimensional space, where $d$ is the number of variables, and $n$ the dimension of each variable (assuming, for simplicity of exposition, that all variables are of the same dimension).  When maximizers concentrate on graphs over $x_1$, they are essentially much simpler, $n$-dimensional objects.  More generally, it is  possible to prove that optimizers  concentrate on $m$ dimensional subsets of the $n\cdot d$ dimensional ambient space, where $m$ satisfies $n \leq m \leq n\cdot d$ and is determined by the output function $b$; this has consequences for the covering number of the support, which will potentially play an important role in future computation.} 
	
	We go on to show that our formulation, combined with standard stability results for optimal transport problems, implies that the  maximal value of the spectral risk measure, and the maximizing joint distributions, are stable with respect to perturbations of the marginals, output function and spectral function.  When the underlying variables are one dimensional and the output function satisfies additional hypotheses, we further leverage the connection to optimal transport to show that the maximal value's dependence on the marginals is in fact Lipschitz.
	
	We take particular note of the special but important case when the spectral risk measure in question is the \emph{expected shortfall}; in this case, the problem further reduces to a \emph{multi-marginal partial transport problem} on the $d$ original distributions. In this case, in higher dimensions, results in \cite{CaffarelliMcCann10,Figalli10} and \cite{KitPass} immediately apply, implying uniqueness and graphical structure of solutions for a certain class of output functions $b$.  Furthermore, the fact, exposed by our work here, that optimal partial transport can be reformulated as a multi-marginal optimal transport problem does not seem to have been observed before, and may be of independent interest in the optimal transport community.
	
	We also extend these ideas to the setting where the output function $b$ is multi-variate valued, using the maximal correlation risk measures developed in \cite{ekeland2012comonotonic}.  We are again able to identify conditions under which the solution concentrates on a graph. 
	
	The paper is organized as follows: in Section \ref{section:prelim}, we first introduce the traditional (two marginal) optimal transport problem and its multi-marginal generalization, and establish certain preliminary results which we will need. We then introduce spectral risk measures, and the problem of maximizing them over joint distributions with fixed marginals, which is the main object of interest in this paper (see \eqref{eqn: max alpha risk} below).  In Section \ref{section: equivalence}, we establish the equivalence between this maximization problem and multi-marginal optimal transport.  In Section \ref{section:1d}, we establish explicit solutions for the maximizing joint distributions when the underlying marginal variables are one dimensional and  in the fifth section, we study how these result generalize to the setting where the underlying variables are higher dimensional. Section \ref{section:stability} is focused on the stability of the problem with respect to variations in the marginals, output function and spectral risk measure, while the final section is reserved for the extension of some of our results to maximal correlation risk measures on multi-variate output functions.

	
	\section{Preliminaries and problem formulation}\label{section:prelim}
	
	\paragraph{Assumption and notations}
	Let $\mu_i \in \Prob(X_i)$ be probability measures on Polish spaces $X_i$, for $i=0,1,\dotsc,d$. Given a mapping $T:X_i \rightarrow X_j$, we say that $T$ pushes $\mu_i$ forward to $\mu_j$, and write 
	\[ T_\#\mu_i = \mu_j,\; \text{if}\; \mu_i(T^{-1}(A)) =\mu_j(A)\; \text{for all Borel}\; A \subseteq X_j.\]  We will let $\Gamma(\mu_0, \mu_1,\dotsc,\mu_d) \subset \Prob(X_0 \times X_1 \times\dotsm\times X_d)$ denote the set of probability measures on $X_0 \times X_1 \times \dotsm\times X_d$ whose marginals are the $\mu_i$; that is, $\mu_i=\Big((x_0,x_1,x_2,\dotsc,x_d) \mapsto x_i\Big)_\#\pi$ for each $i$ and each $\pi \in \Gamma(\mu_0, \mu_1,\dotsc,\mu_d)$.
	\begin{definition}[$(1,\dotsc,d)$-marginal]
	    For $\pi \in \Prob(X_0 \times X_1 \times \dotsm\times X_d)$, we call the \emph{$(1,\dotsc,d)$-marginal} of $\pi$, the projection of $\pi$ onto $X_1 \times\dotsm\times X_d$; that is, $\Big((x_0,x_1,x_2,\dotsc,x_d) \mapsto (x_1,x_2,\dotsc,x_d)\Big)_\#\pi$. Note that if $ \pi \in \Gamma(\mu_0, \mu_1,\dotsc,\mu_d)$ then clearly its $(1,\dotsc,d)$-marginal is in $\Gamma( \mu_1,\dotsc,\mu_d)$.
	\end{definition}
	
	As usual we define the cumulative distribution function (c.d.f) of a probability measure
	$\mu\in\Prob(\Rsp)$ as $F_\mu(x) = \mu((-\infty,x])$. The generalized or  pseudo-inverse of $F_\mu$ is defined as
	$$ F_\mu^{-1}(m) = \inf\{x\in\Rsp\;|\; F_\mu(x)\geq m \}.$$ 
	We recall that $F_\mu^{-1}(m)$ is the value of the $m$th quantile.
	\paragraph{Optimal Transport problem}
	Given two probability distributions $\mu_0$ and $\mu_1$ on Polish spaces $X_0$ and $X_1$, respectively,  and a surplus function $s: X_0\times X_1\to \R$, the optimal transport problem (in its Monge formulation) consists in maximising 
	\begin{equation}
		\label{eq:monge}
		\int_{X_0} s(x,T(x)) \dd\mu_0(x)
	\end{equation}
	under the constraint that $T_\# \mu_0=\mu_1$ (namely $\mu_1$ is the image measure of $\mu_0$ through the map $T$).
	This is a delicate problem since the mass conservation constraint $T_\# \mu_0=\mu_1$ is highly nonlinear (and the feasible set may even be empty, for instance if $\mu_0$ is a Dirac mass and $\mu_1$ is not). For this reason, it is common to study  a relaxed formulation of \eqref{eq:monge} which allows mass splitting; that is, 
	\begin{equation}
		\label{eq:kant}
		\max_{\pi\in\Gamma(\mu,\nu)}\int_{X_0\times X_1} s(x,y) \dd\pi(x,y)
	\end{equation}
	where, as above, $\Gamma(\mu_0,\mu_1)$ consists of all probability measures on $X_0\times X_1$ having $\mu_0$ and $\mu_1$ as marginals. 
	Note that this is a (infinite dimensional) linear programming problem and that there exists solutions under very mild assumptions on $s, \mu_0$ and $\mu_1$.
	A solution $\pi$ of \eqref{eq:kant} is called an optimal transport plan; in particular,   if an optimal plan of \eqref{eq:kant} has a deterministic form $\pi = (\id,T)_\sharp\mu_0$ (which means that no splitting of mass occurs and $\pi$ is concentrated on the graph of $T$) then $T$ is  an optimal transport map, \ie a solution to \eqref{eq:monge}.  It is therefore sometimes called a \emph{Monge solution} of \eqref{eq:kant}.
	The linear problem \eqref{eq:kant}  also has a convenient dual formulation
	\begin{equation}
		\label{eq:dual}
		\min_{\substack{u,v\\ u(x)+v(y)\geq s(x,y)}}\int_{X_0}u(x)\dd\mu_0(x)+\int_{X_1}v(y)\dd\mu_1(y)
	\end{equation}
	where $u(x)$ and $v(y)$ are the so-called Kantorovich potentials. Optimal Transport theory for two marginals has developed very rapidly in the 25 last years; there are well known conditions on $s$, $\mu_0$ and $\mu_1$ which guarantee that there is a unique deterministic optimal plan   and we refer to the textbooks by Santambrogio \cite{santambook}  and Villani \cite{Villani-TOT2003,Villani-OptimalTransport-09}, for a detailed exposition.

	
	\paragraph{Multi-marginal optimal transport}
	In this paper  we consider a generalization of \eqref{eq:monge} to the case in which more than two marginals are involved: 
	given probability measures $\mu_0,\mu_1,\dotsc,\mu_d$ on spaces $X_0,X_1,\dotsc,X_d$, and a surplus function $s(x_0,x_1,\dotsc,x_d)$, the multi-marginal optimal transport problem is to maximize
	\begin{equation}\label{eqn: multi-marginal ot}
		\int_{X_0\times X_1 \times\dotsm\times X_d} s(x_0,x_1,\dotsc,x_d)\dd\pi(x_0,x_1,\dotsc,x_d)
	\end{equation}
	over $\pi \in \Gamma(\mu_0,\mu_1,\dotsc,\mu_d)$.  
	This problem has attracted increasing attention in recent years since it arises naturally in many different settings such as economics \cite{Ekeland05}, quantum chemistry \cite{buttazzo2012optimal,cotar2013density}, fluid mechanics \cite{brenier1989least}, etc. Our results in Section \ref{section: equivalence} will bring forth a new application to maximizing spectral risk measures.
	
	It is well known that under mild conditions (for example, continuity of $s$ and compact support of the $\mu_i$ is more than sufficient) \eqref{eqn: multi-marginal ot} admits a solution; see, for example, \cite{PassSurvey}.
	Although the structure of solutions to \eqref{eqn: multi-marginal ot} can generally depend delicately on $s$, there is a growing theory, and an important class for which solutions can be derived essentially explicitly, which we describe below.  In addition, there are now powerful numerical tools to compute solutions for problems falling outside this class (for a more detailed discussion, we refer to Section 8.1 in \cite{Benamou2021} and the references therein).
	
	The most tractable of these problems occurs when the underlying asset spaces are one dimensional, and the mixed second derivatives of $s$ interact in a certain way.
	\begin{definition}[Compatibility]
		Suppose each $X_i \subset \mathbb{R}$ is a bounded real interval.  Assume that $s \in \Class^2(X_0 \times X_1 \times \dotsm\times X_d)$.  We say that $s$ is strictly compatible  if  for each three distinct indices $i,j,k \in \{0,1,\dotsc,d\}$ and each $(x_0,\ldots,x_d) \in X_0 \times \dotsm\times X_d$ we have
		\begin{equation}\label{eqn: compatibility}
			\frac{\partial ^2s}{\partial x_i\partial x_j}\Bigg[\frac{\partial ^2s}{\partial x_k\partial x_j}\Bigg]\frac{\partial ^2s}{\partial x_k\partial x_i}(x_0,\dotsc,x_d) >0.
		\end{equation}
		We will say that $s$ is weakly compatible (or simply compatible) if for each $i \neq j$, we have either $\frac{\partial ^2s}{\partial x_i\partial x_j} \geq 0$ throughout $X_0 \times X_1 \times \dotsm\times X_d$ or $\frac{\partial ^2s}{\partial x_i\partial x_j} \leq 0$ throughout $X_0 \times X_1 \times \dotsm\times X_d$, and 
		\begin{equation}\label{eqn: weak compatibility}
			\frac{\partial ^2s}{\partial x_i\partial x_j}\Bigg[\frac{\partial ^2s}{\partial x_k\partial x_j}\Bigg]\frac{\partial ^2s}{\partial x_k\partial x_i}(x_0,\dotsc,x_d) \geq 0,
		\end{equation}
		for each distinct $i,j,k \in \{0,1,\dotsc,d\}$ and each $(x_0,\ldots,x_d) \in X_0 \times \dotsm\times X_d$.
	\end{definition} 
	The fact that the mixed partials $\frac{\partial ^2s}{\partial x_i\partial x_j}$ do not change signs under the compatibility condition allows us to  partition the set $\{0,1,2,\dotsc,d\} =S_+\cup S_-$ of indices into disjoint subsets $S_+$ and $S_-$ such that $0\in S_+$ and for each $i \neq j$, $ \frac{\partial ^2s}{\partial x_i\partial x_j}\geq 0 $ throughout $X_0 \times X_1 \times \dotsm\times X_d$ if either both $i$ and $j$ are in $S_+$ or if both are in  $S_-$, and $ \frac{\partial ^2s}{\partial x_i\partial x_j}\leq 0 $ throughout $X_0 \times X_1 \times \dotsm\times X_d$ otherwise. The inequalities are strict if strict compatibility holds .
    A well known special case of compatibility is captured by the following definition.
	\begin{definition}[Supermodularity]
		Suppose each $X_i \subset \mathbb{R}$ is a connected real interval.
		Assume that $s \in \Class^2(X_0 \times X_1 \times \dotsm\times X_d)$.  We say that $s$ is  supermodular if $\frac{\partial ^2s}{\partial x_i\partial x_j} \geq0$ for all $i \neq j$; we say $s$ is strictly supermodular if $\frac{\partial ^2s}{\partial x_i\partial x_j} >0$ for all $i \neq j$
	\end{definition}

	\begin{remark}\label{rem: supermodularity vs compatibility}
		Notice that if  $s$ is  supermodular, then it is clearly compatible.  On the other hand, if $s$ is compatible, define $\tilde s(\tilde x_0,\tilde x_1,\dotsc,x_d) =s(x_0,x_1,\dotsc,x_d)$, where $\tilde x_0=x_0$ and for each $i=1,2,\dotsc,d$ we set  $\tilde x_i = x_i$ if $i \in S_+$ and $\tilde x_i = -x_i$ if $i \in S_-$.  Then it is straightforward to show that $\tilde s$ is supermodular.  Therefore, compatibility is simply  supermodularity up to a change of variables.
		
		Both supermodularity and compatibility may be extended to non $C^2$ functions, by considering partial differences; for simplicity of exposition, we stick with the version relying on second derivatives here.
		
	\end{remark}
	We then introduce the following special coupling in $\Gamma(\mu_0,\mu_1,\dotsc,\mu_d)$.

	\begin{definition}
		For a compatible $s$, we define the \emph{$s$ - comonotone coupling} by:
		\begin{equation}\label{eqn: monotone solution}
			\pi =(G_0,G_1,\dotsc,G_d)_\# \Leb_{[0,1]},
		\end{equation} where $G_0 = F^{-1}_{\mu_0}$ and for each $i =1,2,\ldots,d$ 
		\begin{equation}
			G_i(m)=\begin{cases}
				F^{-1}_{\mu_i}(m)\; &\hbox{if}\; i \in S_+,\\
				F^{-1}_{\mu_i}(1-m)\; &\hbox{if}\; i \in S_-.
			\end{cases}
		\end{equation}\footnote{There may be multiple ways to define the $G_i$ (if, for example, for some $i$, $ \frac{\partial ^2s}{\partial x_i\partial x_j}=0 $ everywhere for all $j$); in this case, the results below apply to all resulting such $s$ - comonotone couplings.  Note, however, that the $G_i$ are uniquely defined when compatibility is strict. }
	\end{definition}
	\begin{remark}
		If $s$ is supermodular, we can take $S_-$ to be empty and  $S_+ =\{0,1,\dotsc,n\}$; the $s$-comonotone coupling is then exactly the well known classical comonotone coupling. 
	\end{remark}
	The following result can be found in \cite{carlier2003class} for strictly supermodular functions as well as in \cite{Passthesis} for compatible functions (where its proof appears together with the formulation of the compatibility conditions and the observation that it is equivalent to strict supermodularity after changing coordinates).
	
	\begin{theorem}
		\label{thm:monge state}
		Suppose that $s$ is strictly compatible.  Then the $s$-comonotone coupling \eqref{eqn: monotone solution} is the unique optimizer in \eqref{eqn: multi-marginal ot}.
		
	\end{theorem}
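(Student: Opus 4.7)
The plan is to reduce to the strictly supermodular case via Remark \ref{rem: supermodularity vs compatibility}, then use $s$-cyclical monotonicity of the support of any optimizer to show that the support must be totally ordered in the componentwise partial order, and finally observe that there is a unique coupling of the prescribed one-dimensional marginals on such a set, namely \eqref{eqn: monotone solution}.

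\textbf{Step 1 (reduction to supermodularity).} Remark \ref{rem: supermodularity vs compatibility} furnishes an affine change of variables $(x_0,\dotsc,x_d) \mapsto (\eps_0 x_0, \dotsc, \eps_d x_d)$, with $\eps_i = +1$ for $i \in S_+$ and $\eps_i = -1$ for $i \in S_-$, sending $s$ to a strictly supermodular $\tilde s$, the marginals $\mu_i$ to their reflections $\tilde \mu_i$, and the $s$-comonotone coupling to the classical comonotone coupling of the $\tilde\mu_i$. It therefore suffices to treat the strictly supermodular case.

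\textbf{Step 2 (monotone support).} By standard multi-marginal OT duality (see \cite{PassSurvey}), any optimizer $\pi$ has $s$-cyclically monotone support, so for every pair $p, q \in \spt(\pi)$ and every index $i$,
\begin{equation*}
s(p_0,\dotsc,p_d) + s(q_0,\dotsc,q_d) \geq s(p_0,\dotsc,p_{i-1},q_i,p_{i+1},\dotsc,p_d) + s(q_0,\dotsc,q_{i-1},p_i,q_{i+1},\dotsc,q_d).
\end{equation*}
When $d = 1$, expanding both sides as iterated integrals of $\partial^2_{x_0 x_1} s$ yields
\begin{equation*}
\int_{q_0}^{p_0}\int_{q_1}^{p_1}\partial^2_{x_0 x_1} s(a,t)\,\dd t\,\dd a \geq 0,
\end{equation*}
and strict positivity of the integrand forces $(p_0 - q_0)(p_1 - q_1) \geq 0$, so that $\spt(\pi)$ is totally ordered. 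For $d \geq 2$ a single coordinate swap involves the mismatched remaining coordinates of $p$ and $q$, so strict supermodularity does not apply to a single swap directly; this is the main obstacle. I would resolve it by chaining two-point rearrangements along a sequence of intermediate configurations that interpolate the unswapped coordinates between those of $p$ and those of $q$, using strict supermodularity in each pair of variables at every step to maintain a net strict gain, and thereby conclude that $p_0 < q_0$ forces $p_i \leq q_i$ for every $i$.

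\textbf{Step 3 (uniqueness and identification).} A probability measure on a subset of $\mathbb{R}^{d+1}$ whose support is totally ordered in the componentwise partial order and whose one-dimensional marginals are $\mu_0, \dotsc, \mu_d$ is uniquely determined: the ordering forces each coordinate $x_i$ to be a nondecreasing function of $x_0$ along the support, and matching the marginal constraints pins these functions down as the quantile maps appearing in \eqref{eqn: monotone solution} (including at atoms, via disintegration). Together with the existence of an optimizer, which is standard under continuity of $s$ on the compact product of the $X_i$, this proves the theorem.
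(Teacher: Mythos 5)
Your Steps 1 and 3 are fine (the sign-flip reduction of Remark \ref{rem: supermodularity vs compatibility} and the fact that a coupling of given one-dimensional marginals whose support is totally ordered componentwise must be \eqref{eqn: monotone solution} are both standard), but Step 2 contains a genuine gap exactly where you flag it, for $d\geq 2$. The single-coordinate-swap inequalities you extract from $s$-cyclical monotonicity do not chain: after one swap the resulting configurations are no longer points of $\spt(\pi)$, so cyclical monotonicity gives you no further inequality at them, and your proposed interpolation "along a sequence of intermediate configurations" has no mechanism to produce the claimed net strict gain -- that bookkeeping is precisely the missing proof. As written, the argument establishes comparability of support points only when $d=1$.

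The repair stays inside your toolkit but uses the two-point monotonicity condition at full strength: in the multi-marginal setting, $c$-cyclical monotonicity with $N=2$ points lets you choose the permutation in each coordinate independently, so for $p,q\in\spt(\pi)$ you may swap \emph{any subset} of coordinates simultaneously. Swapping the set $\{i:\ q_i>p_i\}$ (after your Step 1 reduction) yields directly the componentwise max/min inequality $s(x^+)+s(x^-)\leq s(p)+s(q)$, which is the $s$-monotonicity property \eqref{eqn: s-monotonicity} that the paper itself invokes in Section \ref{section:1d}. Combining it with the integral identity \eqref{eqn: partial difference}, whose integrand is a sum of nonnegative terms $\frac{\partial^2 s}{\partial x_i\partial x_j}(x_i^+-x_i)(x_j-x_j^-)$ with strictly positive mixed partials under strict supermodularity, forces every product $(x_i^+-x_i)(x_j-x_j^-)$ to vanish, i.e.\ $p$ and $q$ are comparable; your Step 3 then concludes. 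Note that the paper does not prove Theorem \ref{thm:monge state} at all -- it cites \cite{carlier2003class} and \cite{Passthesis} -- but the max/min argument just described is the one those references (and the paper's own later proof for its one-dimensional uniqueness lemma) actually use, so with this substitution for your chaining step your proof becomes correct and essentially the standard one.
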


	\begin{remark}\label{rem: 2 d monotone}
		A classical but important case occurs when $d=2$ and $s(x_0,x_1) =x_0x_1$.  It is well known that there is only one measure in $\Gamma(\mu_0,\mu_1)$ with monotone increasing support.  This will be used in the proof of Lemma \ref{lem: monotone 2d solutions} below.  
		
		For $d \geq 3$, the $s$-comonotone $\pi$ is characterized by the fact that its twofold marginals, $\pi_{ij} =((x_0,x_1,\ldots,x_d) \mapsto (x_i,x_j))_\# \pi$ are the monotone increasing (respectively decreasing) couplings if $\frac{\partial ^2s}{\partial x_i\partial x_j} >0$ (respectively $\frac{\partial ^2s}{\partial x_i\partial x_j} <0$); compatibility ensures existence of a $\pi$ with this property.
	\end{remark}
	Though the following result does not seem to be available in the literature, it is a straightforward consequence of Theorem \ref{thm:monge state} and the well known stability of optimal transport with respect to perturbations of the cost function.  It asserts that the optimality of the measure constructed in Theorem \ref{thm:monge state} still holds if the strong compatibility assumption is relaxed to weak compatibility, although the uniqueness assertion may not.
	\begin{proposition}\label{prop: non unique monge state}
		Suppose that $s$ is weakly compatible.  Then the $s$-comonotone coupling \eqref{eqn: monotone solution} is optimal in \eqref{eqn: multi-marginal ot}.
	\end{proposition}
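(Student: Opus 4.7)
The plan is to obtain this from Theorem \ref{thm:monge state} by a standard approximation argument, exploiting the fact that strict compatibility is an open condition that can be enforced by an arbitrarily small quadratic perturbation.

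Fix a partition $\{0,1,\dotsc,d\} = S_+ \cup S_-$ with $0 \in S_+$ witnessing the weak compatibility of $s$, and let $\pi$ be the $s$-comonotone coupling associated to this choice. I would then introduce the quadratic perturbation
\[
q(x_0,\dotsc,x_d) \;=\; \sum_{\substack{i<j\\ \{i,j\}\subset S_+\text{ or }\{i,j\}\subset S_-}} x_i x_j \;-\; \sum_{\substack{i<j\\ i\in S_+,\,j\in S_-\text{ or }i\in S_-,\,j\in S_+}} x_i x_j,
\]
and set $s_\varepsilon := s + \varepsilon q$ for $\varepsilon > 0$. The mixed partials $\partial^2 s_\varepsilon/\partial x_i\partial x_j$ equal $\partial^2 s/\partial x_i \partial x_j \pm \varepsilon$, with sign chosen so that the perturbation reinforces the weak sign of $\partial^2 s/\partial x_i \partial x_j$ dictated by the partition. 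Consequently $s_\varepsilon$ is strictly compatible for every $\varepsilon > 0$, with the same distinguished partition, so its associated $s_\varepsilon$-comonotone coupling coincides with $\pi$.

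Theorem \ref{thm:monge state} then applies to $s_\varepsilon$ and yields that $\pi$ is the (unique) maximizer of $\pi' \mapsto \int s_\varepsilon \dd\pi'$ over $\Gamma(\mu_0,\mu_1,\dotsc,\mu_d)$. For any competitor $\pi' \in \Gamma(\mu_0,\dotsc,\mu_d)$ this gives
\[
\int s \dd\pi + \varepsilon \int q \dd\pi \;\geq\; \int s \dd\pi' + \varepsilon \int q \dd\pi'.
\]
Because each $X_i$ is a bounded interval, $q$ is bounded on $X_0\times\dotsm\times X_d$, so $\int q \dd\pi$ and $\int q \dd\pi'$ are both bounded uniformly in $\varepsilon$. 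Letting $\varepsilon \to 0^+$ yields $\int s\dd\pi \geq \int s\dd\pi'$, proving optimality of $\pi$.

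There is no real obstacle beyond identifying the correct perturbation: the main point is that the sign structure encoded by $(S_+,S_-)$ is preserved under adding $\varepsilon q$, so the approximating problems all share the same comonotone optimizer $\pi$, and the conclusion follows by a one-line limiting argument rather than by a delicate stability theorem. Uniqueness is of course lost in the limit, consistent with the footnote after the definition of the $s$-comonotone coupling.
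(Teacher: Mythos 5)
Your argument is correct and is essentially the paper's own proof: both perturb $s$ by an arbitrarily small quadratic term whose mixed partials reinforce the sign pattern of the partition $(S_+,S_-)$, invoke Theorem \ref{thm:monge state} for the strictly compatible $s_\varepsilon$ (whose comonotone coupling is the same $\pi$), and pass to the limit $\varepsilon\to 0^+$ using boundedness of the domains. The only cosmetic difference is the choice of perturbation ($\pm\varepsilon x_i x_j$ versus the paper's $\pm\epsilon(x_i-x_j)^2$ terms) and phrasing the limit via boundedness of $q$ rather than uniform convergence of $s_\varepsilon$ to $s$, which are equivalent here.
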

	
	\begin{proof}
		For $\epsilon >0$, define
		\[\begin{split}
		s_\epsilon(x_0,x_1,\dotsc,x_d) &=s(x_0,x_1,\dotsc,x_d) +\epsilon \sum_{i,j \in S_+ }(x_i-x_j)^2 +\epsilon \sum_{i,j \in S_- }(x_i-x_j)^2\\ &-\epsilon \sum_{i \in S_+,j \in S_- }(x_i-x_j)^2		    
		\end{split}\]
		notice now that $s_\epsilon$ is strictly compatible and so Theorem \ref{thm:monge state} implies that $\pi =(G_0,G_1,\dotsc,G_d)_\# \Leb_{[0,1]}$ is optimal in \eqref{eqn: multi-marginal ot} for each $\epsilon$.  Therefore, for any other $\tilde \pi \in \Gamma(\mu_0,\mu_1,\dotsc,\mu_d)$ we have
		$$
		\int s_\epsilon \dd\tilde\pi \leq \int s_\epsilon \dd \pi
		$$
		
		Since the $s_\epsilon$ converge uniformly to $s$ as $\epsilon \rightarrow 0$, we can take the limit in the inequality above and obtain
		$$
		\int s \dd\tilde\pi \leq \int s \dd \pi.
		$$
		As  $\tilde \pi \in \Gamma(\mu_0,\mu_1,\dotsc,\mu_d)$ was arbitrary, this yields the desired result.
	\end{proof}
	

	We also note that, as for the two marginal problem,  \eqref{eqn: multi-marginal ot} admits a very useful dual formulation:
	
	\begin{equation}
		\label{eqn:dualit-multi-ot}
		\begin{split}
		\inf\bigg\{\sum_{i=0}^{d}\int_{X_i}u_i(x_i)\dd\mu_i(x_i)\;&|\;u_i\in\Class_b(X_i),\forall i=0,\dotsc,d,\\ &\sum_{i=0}^{d} u_i(x_i)\geq s(x_0,\dotsc,x_d)\bigg\}.		    
		\end{split}
	\end{equation}
	As with the primal problem \eqref{eqn: multi-marginal ot}, it is well known that under mild conditions a solution $(u_0,u_1,\dotsc,u_d)$ to \eqref{eqn:dualit-multi-ot} exists; see, for example, \cite{gangbo1998optimal,CarlierNazaret08,Pass11}.
	In particular, given optimal solutions $\pi$ and $(u_0,\dotsc,u_d)$ to \eqref{eqn: multi-marginal ot} and \eqref{eqn:dualit-multi-ot}, respectively,  the following optimality condition holds
	\begin{equation}
		\label{eqn: opt condition}
		\sum_{i=0}^{d} u_i(x_i) = s(x_0,\dotsc,x_d),\; \pi-\text{a.e.}
	\end{equation}
	moreover, by Proposition \ref{prop: non unique monge state}  for a compatible $s$  \eqref{eqn: opt condition} rewrites
	\[ \sum_{i=0}^{d} u_i(G_i(m)) = s(G_0(m),\dotsc,G_d(m)),\; \Leb-\text{a.e..} \]

	
	\subsection{Maximal spectral risk measures over couplings of given marginals}
	
	Let the real line describe a certain level of risk  (\eg the level of radiation in the nuclear power plant) and $\mu$ be a probability measure on $\Rsp$ which can be interpreted as the distribution of risk. We will consider the following form of quantifier of the risk associated with $\mu$.
	\begin{definition}[spectral risk measure]\label{def: spectral risk measure}
		A functional $\mathcal{R}:\mathcal{P}(\mathbb{R}) \rightarrow \mathbb{R}$ is called a \emph{spectral risk measure} if it takes the form $\mathcal{R}=\mathcal{R}_\alpha$, where	
		\begin{equation}\label{eqn: alpha risk}
			\mathcal{R}_\alpha(\mu):=	\int_0^1F_\mu^{-1}(m)\alpha(m)\dd m.
		\end{equation}
	\end{definition}
	\begin{remark}
	Note that by taking $\alpha$ to be defined and real valued on $[0,1]$, we are tacitly assuming that it is bounded, since $\alpha(0) \leq \alpha(m) \leq \alpha(1)$ by monotonicity; the same assumption was made in \cite{ghossoub2020maximum}.  This is mostly for technical convenience; we expect that most of our results can be extended to the more general case $\alpha:[0,1) \rightarrow \mathbb{R}_+$, where possibly $\lim_{m \rightarrow 1}\alpha(m) =\infty$, under suitable hypothesis (for instance, decay conditions on $\alpha_\#\Leb_{[0,1]}$).
	\end{remark}
	For a given $\alpha$, we will often refer to the spectral risk measure $\mathcal{R}_\alpha$ as the \emph{$\alpha$-risk}, and $\alpha$ as the the \emph{spectral function.}
	
	Spectral risk measures are pervasive in risk management and insurance.  Indeed, it is well known that any coherent risk measure which is additive for comontonic random variables must be spectral (see for instance \cite{FollmerSchied2002}).

	We begin with the following  variational characterization of spectral risk measures.  This result is well known to experts (see, for example, \cite{Ruschendorf2006} and \cite{ekeland2012comonotonic}, in which this fact is the basis for a multi-variate extension).  We include a brief proof here for the convenience of the reader
	
	
	\begin{lemma}\label{lem: ot formula for R} 
		For any spectral risk measure and any probability measure $\mu$ with $\int_{\mathbb{R}} x \dd\mu(x) > -\infty$, 
		\begin{equation}\label{eq:R-alpha-variational}
			\mathcal{R}_\alpha(\mu) = \max_{\pi\in \Gamma(\alpha_{\sharp}\Leb_{[0,1]},\mu)} \int_{\Rsp\times \Rsp} xy \dd \pi(x,y),
		\end{equation}
		where we have denoted $\Gamma(\alpha_{\sharp}\Leb_{[0,1]},\mu)$ the space of probability
		measures on $\Rsp^2$ with marginals $\alpha_{\sharp}\Leb_{[0,1]}$ and $\mu$.
	\end{lemma}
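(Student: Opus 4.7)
The plan is to recognize the right-hand side of \eqref{eq:R-alpha-variational} as a two-marginal optimal transport problem with surplus $s(x,y)=xy$. Since $\frac{\partial^2 s}{\partial x \partial y}=1>0$, this surplus is strictly supermodular, hence strictly compatible with $S_+=\{0,1\}$ and $S_-=\emptyset$. Thus the classical Hoeffding--Fréchet theorem, which is the $d=1$ case of Theorem \ref{thm:monge state} (see also Remark \ref{rem: 2 d monotone}), identifies the unique maximizer as the comonotone coupling of the two marginals.

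First I would write the comonotone coupling explicitly. Because $\alpha:[0,1]\to\Rsp$ is non-decreasing, it coincides $\Leb$-a.e.\ on $[0,1]$ with the quantile function of its pushforward, i.e.\ $F_{\alpha_\#\Leb_{[0,1]}}^{-1}=\alpha$ up to a Lebesgue-null set. Combined with $F_\mu^{-1}$, also non-decreasing, the comonotone coupling between $\alpha_\#\Leb_{[0,1]}$ and $\mu$ is therefore
$$\pi^{\ast} := \bigl(\alpha, F_\mu^{-1}\bigr)_\# \Leb_{[0,1]} \;\in\; \Gamma\!\bigl(\alpha_\#\Leb_{[0,1]},\mu\bigr).$$
A change of variables then yields
$$\int_{\Rsp\times\Rsp} xy\,\dd\pi^{\ast}(x,y) \;=\; \int_0^1 \alpha(m)\, F_\mu^{-1}(m)\,\dd m \;=\; \mathcal{R}_\alpha(\mu),$$
matching the definition of the spectral risk measure, which will supply the lower bound in \eqref{eq:R-alpha-variational}.

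The matching upper bound, namely that no other coupling does better, is exactly the optimality statement in Theorem \ref{thm:monge state}. The only subtlety I anticipate is that Theorem \ref{thm:monge state} is stated on bounded intervals, whereas here $\mu$ may have unbounded support. To handle this, I would exploit the fact that $\alpha_\#\Leb_{[0,1]}$ is compactly supported, since $\alpha$ is bounded (as observed in the remark following Definition \ref{def: spectral risk measure}); together with the hypothesis $\int x\,\dd\mu > -\infty$, this ensures $\int xy\,\dd\pi$ is well-defined in $(-\infty,+\infty]$ for every $\pi\in\Gamma(\alpha_\#\Leb_{[0,1]},\mu)$. I would then truncate $\mu$ to $\mu_n$ supported on $[-n,n]$, apply Theorem \ref{thm:monge state} on the compact setting to obtain optimality of the comonotone coupling of $\alpha_\#\Leb_{[0,1]}$ and $\mu_n$, and pass to the limit $n\to\infty$ using the uniform bound $|xy|\leq \|\alpha\|_\infty |y|$ together with the integrability coming from the hypothesis. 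Alternatively, one can bypass compactness entirely by invoking the cyclical monotonicity characterization of optimality: any coupling whose support contains a pair $(x,y),(x',y')$ with $(x-x')(y-y')<0$ can be strictly improved by a measurable swap of couplings along that pair, so only the monotone, hence comonotone, coupling can be optimal.
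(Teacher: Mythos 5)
Your proposal is correct and follows essentially the same route as the paper: rewrite the right-hand side as optimal transport with the strictly supermodular surplus $s(x,y)=xy$, invoke Theorem \ref{thm:monge state} to identify the comonotone coupling $(\alpha,F_\mu^{-1})_\#\Leb_{[0,1]}$ as the maximizer, and compute its value to recover $\mathcal{R}_\alpha(\mu)$. Your extra care about unbounded support (truncation or cyclical monotonicity) is a reasonable refinement of a point the paper's proof passes over silently, but it does not change the argument's substance.
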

	\begin{proof}
		Let us re-write the right hand side of \eqref{eq:R-alpha-variational} as the following optimal transport problem between $\mu_0=\alpha_{\sharp}\Leb_{[0,1]}$ and $\mu_1=\mu$
		\[ \max_{\gamma\in \Gamma(\mu_0,\mu_1)} \int_{[0,1]\times \Rsp}s(x,y) \dd \pi(x,y), \]
		where $s(x,y)=xy$.
		Then, since the surplus is strictly supermodular there exists a unique optimal $\pi$ of the form $\pi=(G_0,G_1)_\sharp\Leb_{[0,1]}$, where $G_0(m)=F^{-1}_{\mu_0}(m)=F^{-1}_{\alpha_\sharp\Leb_{[0,1]}}(m)=\alpha(m)$ and $G_1(m)=F^{-1}_{\mu}(m)$, by Theorem \ref{thm:monge state}. The desired result follows.
	\end{proof}
	
	
	\begin{corollary} 
		\label{coro: concavity}
		$\mathcal{R}_\alpha$ is concave on $\Prob(\Rsp)$.
	\end{corollary}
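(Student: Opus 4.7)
The plan is to deduce concavity directly from the variational characterization in Lemma \ref{lem: ot formula for R}. The key observation is that $\mathcal{R}_\alpha(\mu)$ is realized as a maximum of a functional which is linear in the transport plan, over a constraint set whose dependence on $\mu$ is affine: only the second marginal changes with $\mu$, while the first is always $\alpha_\sharp \Leb_{[0,1]}$.

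Concretely, I would fix $\mu_1, \mu_2 \in \Prob(\mathbb{R})$ with finite first moment and $t \in [0,1]$, and let $\pi_1, \pi_2$ be optimal plans for $\mu_1, \mu_2$ respectively in \eqref{eq:R-alpha-variational}. Since both $\pi_i$ have first marginal $\alpha_\sharp \Leb_{[0,1]}$, the convex combination $t\pi_1 + (1-t)\pi_2$ also has first marginal $\alpha_\sharp \Leb_{[0,1]}$, while its second marginal is exactly $t\mu_1 + (1-t)\mu_2$. Hence $t\pi_1 + (1-t)\pi_2 \in \Gamma(\alpha_\sharp \Leb_{[0,1]}, t\mu_1 + (1-t)\mu_2)$, and it is a candidate for the maximization defining $\mathcal{R}_\alpha(t\mu_1 + (1-t)\mu_2)$. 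Using linearity of $\pi \mapsto \int xy \, \dd\pi$ and then the optimality of $\pi_1, \pi_2$ gives
\begin{equation*}
\mathcal{R}_\alpha(t\mu_1 + (1-t)\mu_2) \geq t \int xy \, \dd\pi_1 + (1-t) \int xy \, \dd\pi_2 = t\mathcal{R}_\alpha(\mu_1) + (1-t)\mathcal{R}_\alpha(\mu_2),
\end{equation*}
which is exactly concavity.

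There is essentially no obstacle to overcome here; the proof is a one-line consequence of Lemma \ref{lem: ot formula for R} combined with the elementary fact that a maximum of linear functionals over an affinely parameterized constraint set is concave in the parameter. Equivalently, one could invoke the dual formulation \eqref{eq:dual} with surplus $s(x,y)=xy$: for each admissible pair $(u,v)$ the functional $\mu \mapsto \int u \, \dd(\alpha_\sharp\Leb_{[0,1]}) + \int v \, \dd\mu$ is affine in $\mu$, so $\mathcal{R}_\alpha$ is an infimum of affine functionals, hence concave. I would present the primal argument above as it is marginally shorter and requires no appeal to strong duality.
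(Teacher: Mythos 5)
Your proof is correct and follows essentially the same route as the paper: take optimal plans for the two measures, form their convex combination, observe it is admissible for the interpolated measure, and use linearity of $\pi \mapsto \int xy \, \dd\pi$ together with optimality. The dual remark is a fine alternative, but the primal argument you present is exactly the paper's.
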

	\begin{proof}
		Let $\mu_0,\mu_1\in \Prob(\Rsp)$ and let $\pi_0,\pi_1$ be
		optimal for the problem \eqref{eq:R-alpha-variational}. Then, with
		$\mu_t = (1-t)\mu_0 + t \mu_1$ and $\pi_t = (1-t)\gamma_0+\gamma_1$
		one has $\pi_t \in \Gamma(\alpha_\sharp\Leb_{[0,1]},\mu_t)$ so that
		\[
		\begin{split}
			\mathcal{R}_\alpha(\mu_t)
			&\geq \int x y \dd \pi_t(x,y) \\
			&= (1-t) \int x y \dd \pi_0(x,y) + t  \int x y \dd \pi_1(x,y) \\
			&= (1-t) \mathcal{R}_\alpha(\mu_0) + t \mathcal{R}_\alpha(\mu_1).
		\end{split}
		\]
	\end{proof}
	
	Return now to the case in which there are  $d$ parameters which enter into the estimation of risk through the output function $b$, and we wish to evaluate the worst case scenario for the $\alpha$-risk $\mathcal{R}_\alpha(b_\#\gamma)$ of $b_\#\gamma$ among couplings $\gamma \in \Gamma(\mu_1, \mu_2,\dotsc,\mu_d)$ of the marginal distributions $\mu_i$.  That is, we want to maximize:
	\begin{equation}\label{eqn: max alpha risk}
		\max_{\gamma \in \Gamma(\mu_1, \mu_2,\dotsc,\mu_d)} \mathcal{R}_\alpha(b_\#\gamma).
	\end{equation}
	\begin{remark}[Maximizing Expected Shortfall is optimal partial transport]\label{rem:: expected shortfall -partial transport}
		A special case of particular importance in risk management applications occurs when $\alpha = \alpha_{m_0}: = \frac{1}{m_0} \one_{[1-m_0,1]}$, in which case  \eqref{eqn: alpha risk} is also know as the \emph{Expected Shortfall}, or \emph{Conditional Value at Risk}. 
		
		In this setting,  the maximization problem \eqref{eqn: max alpha risk} is actually equivalent to a well known variant of the optimal transport problem; indeed, it can be reformulated into
		\[ \max_{\gamma \in \Gamma_{m_0}(\mu_1,\dotsc,\mu_d)} \frac{1}{m_0} \int
		b(x_1,\dotsc, x_d) \dd \gamma(x_1,\dotsc,x_d)\]
		where $ \Gamma_{m_0}(\mu_1,\dotsc,\mu_d)$ denotes the set of   
		non-negative measures $\gamma$ on $X_1\times\dotsm\times X_d$ with
		total mass $m_0$, such that its $i-$marginal $\gamma_i$ is dominated by $\mu_i$ for each $i$; that is $\int_{X_i}\phi\dd\gamma_i \leq \int_{X_i}\phi\dd\mu_i$ for all non-negative test functions $\phi\in\Class^0(X_i)$.
		
		This is known as the  \emph{optimal partial transport problem} when $d=2$ \cite{CaffarelliMcCann10, Figalli10}, and the \emph{multi-marginal optimal partial transport problem} \cite{KitPass} when $d>2$.  
		
		As we will show below, the problem is in fact equivalent to an ordinary multi-marginal optimal transport problem with an additional marginal; see Theorem \ref{thm: equivalence with ot} and Remark \ref{rem: partial transport is multi-marginal transport} below.
	\end{remark}

	\begin{proposition}
		Let $b:X_1 \times X_2 \times \dotsm\times X_d \rightarrow \mathbb{R}$ be an upper semi-continuous function bounded from below.  Then 
		the map $\gamma\mapsto
		\mathcal{R}_\alpha(b_\#\gamma)$ is concave. 
	\end{proposition}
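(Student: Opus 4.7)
The plan is to decompose $\gamma \mapsto \mathcal{R}_\alpha(b_\#\gamma)$ as the composition of an affine map with a concave functional, and then invoke Corollary \ref{coro: concavity}.

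First, I would verify that the pushforward $\gamma \mapsto b_\#\gamma$ is affine from $\Gamma(\mu_1,\ldots,\mu_d) \subset \Prob(X_1 \times \dotsm \times X_d)$ into $\Prob(\mathbb{R})$. Given $\gamma_0,\gamma_1 \in \Gamma(\mu_1,\ldots,\mu_d)$ and $t \in [0,1]$, set $\gamma_t = (1-t)\gamma_0 + t\gamma_1$. For any bounded continuous test function $\phi$ on $\mathbb{R}$,
\[
\int \phi \, \dd(b_\#\gamma_t) = \int (\phi \circ b) \, \dd\gamma_t = (1-t)\int \phi \, \dd(b_\#\gamma_0) + t \int \phi \, \dd(b_\#\gamma_1),
\]
which identifies $b_\#\gamma_t = (1-t) b_\#\gamma_0 + t\, b_\#\gamma_1$.

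Next, I would check the integrability hypothesis needed to apply Corollary \ref{coro: concavity}. Since $b$ is bounded from below by some constant $M$, for every $\gamma$ the measure $b_\#\gamma$ is supported in $[M,\infty)$, so $\int_\mathbb{R} x \, \dd(b_\#\gamma)(x) = \int b \, \dd\gamma \geq M > -\infty$. Hence Corollary \ref{coro: concavity} applies to $b_\#\gamma_0$, $b_\#\gamma_1$ and $b_\#\gamma_t$.

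Combining these two ingredients,
\[
\mathcal{R}_\alpha(b_\#\gamma_t) = \mathcal{R}_\alpha\bigl((1-t) b_\#\gamma_0 + t\, b_\#\gamma_1\bigr) \geq (1-t)\mathcal{R}_\alpha(b_\#\gamma_0) + t\, \mathcal{R}_\alpha(b_\#\gamma_1),
\]
proving concavity. There is no real obstacle here: the statement is just concavity precomposed with an affine map, and the upper semi-continuity and lower boundedness of $b$ are used only to guarantee that the pushforward lies in the domain where $\mathcal{R}_\alpha$ was shown to be concave.
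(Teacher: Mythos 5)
Your proof is correct, but it takes a different route from the paper. You treat the map as the composition of the affine (indeed linear) pushforward $\gamma\mapsto b_\#\gamma$ with the functional $\mathcal{R}_\alpha$, whose concavity on $\Prob(\Rsp)$ is exactly Corollary \ref{coro: concavity}; concavity of the composition is then immediate, and your check that $b$ bounded below guarantees $\int x\,\dd(b_\#\gamma)(x)>-\infty$ is the right way to place $b_\#\gamma$ in the domain where the variational formula of Lemma \ref{lem: ot formula for R} applies. The paper instead rewrites $\mathcal{R}_\alpha(b_\#\gamma)$ as $\max_{\pi\in\Gamma(\alpha_\sharp\Leb_{[0,1]},\gamma)}\int x_0\,b(z)\,\dd\pi(x_0,z)$, i.e.\ as a supremum of linear functionals over a constraint set depending affinely on $\gamma$, and then repeats the mixture-of-couplings argument of Corollary \ref{coro: concavity}; note that the passage from couplings with $b_\#\gamma$ to couplings with $\gamma$ itself implicitly requires a lifting/disintegration step of the kind made precise later in Lemma \ref{lem: glueing}, a step your argument avoids entirely. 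What your approach buys is economy: it reuses the already-proved concavity and nothing else, and it also makes transparent that upper semi-continuity of $b$ plays no role in concavity (it is only needed for existence of maximizers). What the paper's formulation buys is foreshadowing: the lifted expression with surplus $x_0\,b(z)$ is precisely the one exploited in Section \ref{section: equivalence} to identify the problem with a multi-marginal optimal transport problem.
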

	
	\begin{proof}
		Using \eqref{eq:R-alpha-variational} one gets
		\begin{align*}
			\mathcal{R}_\alpha(b_\#\gamma)
			&= \max_{\sigma \in \Gamma(\alpha_\sharp\Leb_{[0,1]}, b_\#\gamma)} \int  x_0y \dd \sigma(x_0,y) \\
			&= \max_{\pi \in \Gamma(\alpha_\sharp\Leb_{[0,1]}, \gamma)} \int x_0 b(z) \dd \pi(x_0,z),
		\end{align*}
		where $z=(x_1,\dotsc,x_d)$.
		Then concavity of $\gamma\mapsto
		\mathcal{R}_\alpha(b_\#\gamma)$ on $\Prob(\Rsp^d)$ follows as in Corollary \ref{coro: concavity}.
	\end{proof}
	
	


	


\section{Equivalence between maximizing spectral risk measures and multi-marginal optimal transport}\label{section: equivalence}
Our first main contribution is to show that the spectral risk maximization problem \eqref{eqn: max alpha risk} is equivalent to the multi-marginal optimal transport problem  \eqref{eqn: multi-marginal ot} with $X_0 = [\alpha(0),\alpha(1)] \subseteq \mathbb{R}$, $\mu_0 = \alpha_\sharp\Leb_{[0,1]}$, the other $X_i$ and $\mu_i$ representing the domains and distributions of the underlying variables, respectively, and

\begin{equation}\label{eqn: effective surplus function}
	s(x_0,x_1,\dotsc,x_d) =x_0b(x_1,\dotsc,x_d).
\end{equation}

\begin{lemma}\label{lem: monotone 2d solutions}
	Suppose that $\pi \in \Gamma(\mu_0,\mu_1,\dotsc,\mu_d)$  and let $\gamma$ be the $(1,...,d)$ -marginal of $\pi$. 
	Then, for $s$ given by \eqref{eqn: effective surplus function},
	$$
	\int_{\times_{i=0}^d X_i} s(x_0,x_1,\dotsc,x_d)\dd\pi(x_0,x_1,\dotsc,x_d) \leq  \mathcal{R}_\alpha(b_\#\gamma)
	$$
	
	Furthermore, we have equality if and only if the support of \[ \tau_\pi =\Big((x_0,x_1,x_2,\dotsc,x_d)\mapsto (x_0,b(x_1,x_2,\dotsc,x_d))\Big)_\#\pi \in \mathcal{P}(\mathbb{R}^2) \] is monotone increasing.
\end{lemma}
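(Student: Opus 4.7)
The plan is to push $\pi$ forward through the map that collapses its last $d$ coordinates to $b(x_1,\dotsc,x_d)$; this transforms the multi-marginal integral of $s$ into a two-marginal optimal transport objective, for which the variational characterization of $\mathcal{R}_\alpha$ in Lemma \ref{lem: ot formula for R} applies directly.

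First I would introduce the map $T\colon X_0\times X_1\times\dotsm\times X_d\to \Rsp^2$ defined by $T(x_0,x_1,\dotsc,x_d)=(x_0,b(x_1,\dotsc,x_d))$, so that by construction $\tau_\pi = T_\#\pi$. The product structure $s(x_0,x_1,\dotsc,x_d)=x_0\, b(x_1,\dotsc,x_d)$ combined with the change-of-variables formula gives immediately
\[
\int s \dd\pi = \int_{\Rsp\times\Rsp} x_0\, y \dd\tau_\pi(x_0,y).
\]
Next I would identify the marginals of $\tau_\pi$: its first marginal is the $x_0$-marginal of $\pi$, namely $\mu_0 = \alpha_\sharp\Leb_{[0,1]}$, while its second marginal is $b_\#\gamma$, since $T$ factors through the projection onto $(x_1,\dotsc,x_d)$ (with law $\gamma$) followed by $b$. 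Hence $\tau_\pi \in \Gamma(\alpha_\sharp\Leb_{[0,1]}, b_\#\gamma)$. The assumption that $b$ is bounded below ensures $\int y \dd(b_\#\gamma)(y) = \int b \dd\gamma > -\infty$, so Lemma \ref{lem: ot formula for R} applies to $b_\#\gamma$ and yields
\[
\int x_0\, y \dd\tau_\pi(x_0,y) \;\leq\; \max_{\sigma \in \Gamma(\alpha_\sharp\Leb_{[0,1]},\, b_\#\gamma)} \int x_0\, y \dd\sigma(x_0,y) \;=\; \mathcal{R}_\alpha(b_\#\gamma),
\]
which is the desired inequality.

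For the equality statement, equality holds precisely when $\tau_\pi$ attains the maximum in the two-marginal optimal transport problem with surplus $(x,y)\mapsto xy$. Since this surplus is strictly supermodular, Lemma \ref{lem: ot formula for R} (together with the uniqueness noted in its proof via Theorem \ref{thm:monge state}) guarantees a unique optimal coupling, and as recorded in Remark \ref{rem: 2 d monotone} this optimal coupling is the unique element of $\Gamma(\alpha_\sharp\Leb_{[0,1]}, b_\#\gamma)$ whose support is monotone increasing. Both directions of the equality characterization then follow.

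I do not expect a substantial obstacle: once the pushforward $\tau_\pi$ is identified as the correct reduction to two marginals, everything collapses to a direct application of Lemma \ref{lem: ot formula for R}. The only mild care point is verifying the equivalence between optimality of $\tau_\pi$ in the two-marginal problem and monotonicity of its support, which is the classical one-dimensional OT fact recalled in Remark \ref{rem: 2 d monotone}; and checking the integrability hypothesis of Lemma \ref{lem: ot formula for R}, which is immediate from the lower boundedness of $b$.
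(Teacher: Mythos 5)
Your proposal is correct and follows essentially the same route as the paper's proof: rewrite $\int s\,\dd\pi$ as $\int x_0 y\,\dd\tau_\pi$ via the pushforward, apply Lemma \ref{lem: ot formula for R} since $\tau_\pi \in \Gamma(\alpha_\sharp\Leb_{[0,1]}, b_\#\gamma)$, and characterize equality through the uniqueness of the monotone increasing optimal coupling from Theorem \ref{thm:monge state}. Your version is simply more explicit about the marginal identification and the integrability hypothesis, which the paper leaves implicit.
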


\begin{proof}
	We first note that
	\begin{equation*}
	\begin{split}
		\int_{\times_{i=0}^d X_i} s(x_0,x_1,\dotsc,x_d)\dd \pi(x_0,\dotsc,x_d) &=\int_{\times_{i=0}^d X_i}x_0b(x_1,\dotsc,x_d)\dd\pi(x_0,\dotsc,x_d)\\
		&=\int_{\mathbb{R}^2}zx_0\dd \tau_\pi(x_0,z)\\
		& \leq  \mathcal{R}_\alpha(b_\#(\gamma)),
		\end{split}
	\end{equation*}
	by Lemma \ref{lem: ot formula for R}.  Furthermore, the uniqueness in Theorem \ref{thm:monge state} implies that the inequality is in fact an equality if and only if the coupling $\tau_\pi$ has monotone increasing support.
\end{proof}
\begin{lemma}\label{lem: glueing}
Given any measure $\gamma \in \Gamma(\mu_1,\dotsc,\mu_d)$, there exists a $\pi \in \Gamma(\mu_0,\mu_1,\dotsc,\mu_d)$ whose $(1,\dotsc,d)$-marginal is $\gamma$, such that $\Big((x_0,x_1,\dotsc,x_d) \mapsto (x_0,b(x_1,x_2,\dotsc,x_d)\Big)_\#\pi$ has monotone increasing support.
\end{lemma}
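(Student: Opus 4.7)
The plan is to build $\pi$ by a gluing construction: combine $\gamma$ on $X_1\times\dotsm\times X_d$ with the classical monotone coupling between $\mu_0$ and the pushforward $\nu := b_\#\gamma$ on the real line, gluing them along the fibres of $b$.

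First, set $\nu = b_\#\gamma \in \mathcal{P}(\mathbb{R})$. Since $\alpha$ is non-decreasing, its pushforward $\mu_0 = \alpha_\#\Leb_{[0,1]}$ lives on the interval $[\alpha(0),\alpha(1)]$. On the two-dimensional product $\mathbb{R}\times\mathbb{R}$, let $\sigma \in \Gamma(\mu_0,\nu)$ be the unique coupling with monotone increasing support, i.e.\ the one produced by Theorem \ref{thm:monge state} (or Remark \ref{rem: 2 d monotone}) applied to the surplus $x_0 y$.

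Second, I would disintegrate both $\gamma$ and $\sigma$ with respect to the value $y = b(x_1,\dotsc,x_d)$. Upper semi-continuity of $b$ makes it Borel measurable, so the disintegration theorem on Polish spaces gives a $\nu$-a.e.\ defined family $\{\gamma_y\}_{y\in\mathbb{R}}$ of probability measures on $X_1\times\dotsm\times X_d$ concentrated on the fibre $b^{-1}(y)$, with $\gamma = \int \gamma_y\,\dd\nu(y)$. Similarly, disintegrate $\sigma$ with respect to its second coordinate to obtain a family $\{\sigma_y\}_{y\in\mathbb{R}}$ of probability measures on $\mathbb{R}$ (in the $x_0$-variable) with $\sigma = \int \sigma_y\otimes\delta_y\,\dd\nu(y)$. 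Then define
\[
\pi \;=\; \int_{\mathbb{R}} \sigma_y \otimes \gamma_y \,\dd\nu(y) \;\in\; \mathcal{P}(X_0\times X_1\times\dotsm\times X_d).
\]

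Third, I would verify the three required properties. The $(1,\dotsc,d)$-marginal of $\pi$ is $\int \gamma_y\,\dd\nu(y) = \gamma$, so in particular each $i$-marginal is $\mu_i$ for $i\ge 1$. The $x_0$-marginal is $\int \sigma_y\,\dd\nu(y)$, which is the first marginal of $\sigma$, namely $\mu_0$. Finally, for any bounded Borel test function $\phi$ on $\mathbb{R}^2$,
\[
\int \phi(x_0,b(x_1,\dotsc,x_d))\,\dd\pi \;=\; \int\!\!\int \phi(x_0,y)\,\dd\sigma_y(x_0)\,\dd\nu(y) \;=\; \int \phi\,\dd\sigma,
\]
using that $\gamma_y$ is supported on $b^{-1}(y)$. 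Hence the pushforward of $\pi$ under $(x_0,x_1,\dotsc,x_d)\mapsto (x_0,b(x_1,\dotsc,x_d))$ is exactly $\sigma$, whose support is monotone increasing by construction.

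The only delicate point is the measurability needed to legitimise the disintegration and the gluing: one needs $\nu = b_\#\gamma$ to be a Borel probability measure on a Polish space (true, as $\mathbb{R}$ is Polish and $b$ is Borel), and the maps $y\mapsto\gamma_y$, $y\mapsto\sigma_y$ to be Borel measurable families, which is precisely what the disintegration theorem on Polish spaces provides. Everything else is a routine check of marginals and of the pushforward identity above.
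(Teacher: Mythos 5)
Your construction is correct and is essentially the paper's own argument: the paper likewise takes the comonotone coupling of $\mu_0$ and $b_\#\gamma$, disintegrates it and $\gamma$ over the value $y$ of $b$, and glues the conditionals fibrewise (the paper allows any coupling of the conditionals, you take the product, which is one admissible choice). Your explicit verification of the marginals and of the pushforward identity is just a spelled-out version of what the paper leaves implicit.
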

\begin{proof}
Let $\tau =(F_{\mu_0}^{-1},F_{b_\#\gamma}^{-1})_\# \Leb_{[0,1]}$ be the comonotonic coupling of $\mu_0$ and $b_\#\gamma$.  We disintegrate $\tau(x_0,y) =\tau^y(x_0)\otimes (b_\#\gamma)(y)$ with respect to its second marginal, so that for any measurable function $g( 
x_0,y)$ we have 
$$
\int_{\mathbb{R}^2} g(x_0,y)\dd\tau(x_0,y) = \int_{\mathbb{R}}\int_{\mathbb{R}}g(x_0,y)\dd\tau^y(x_0)\dd(b_\#\gamma)(y),
$$
and $\gamma(x_1,...,x_d) =\gamma^y(x_1,...,x_d) \otimes (b_\#\gamma)(y)  $ with respect to $b$, so that for any measurable $h(x_1,...,x_d)$ we have
$$
\int_{X_1 \times\dotsm\times X_d}h(x_1,\dotsc,x_d)\dd\gamma(x_1,\dotsc,x_d) =\int_{\mathbb{R}} \int_{b^{-1}(y)}h(x_1,\dotsc,x_d)\dd\gamma ^y(x_1,\dotsc,x_d) \dd(b_\#\gamma).
$$

Any measure $\pi(x_0,x_1...,x_d) =\pi^y(x_0,x_1,...,x_d) \otimes (b_\#\gamma)(y)$ on $X_0 \times (X_1 \times\dotsm \times X_d)$, whose conditional probabilities $\pi^y \in \Gamma(\tau^y,\gamma^y) \subseteq \mathcal{P}(X_0\times X_1\times...\times X_d)$ after disintegrating with respect to the mapping $(x_0,\dotsc,x_d) \mapsto (x_0,b(x_1,...,x_d))$ are couplings of $\tau^y$ and $\gamma^y$ for $b_\#\gamma$ almost every $y$ then satisfies the requirement.
\end{proof}
We are now ready to establish the equivalence between the maximal $\alpha-$risk problem \eqref{eqn: max alpha risk} and the multi-marginal optimal transport problem \eqref{eqn: multi-marginal ot}.
\begin{theorem}\label{thm: equivalence with ot}
A probability measure $\pi$ in $\Gamma(\mu_0,\mu_1,\dotsc,\mu_d)$ is optimal for \eqref{eqn: multi-marginal ot} with cost function \eqref{eqn: effective surplus function} if and only if its $(1,\dotsc,d)$-marginal  
is optimal in \eqref{eqn: max alpha risk}, and $ \tau_\pi=\Big((x_0,x_1,x_2,\dotsc,x_d)\mapsto (x_0,b(x_1,x_2,\dotsc,x_d))\Big)_\#\pi$ has monotone increasing support.
\end{theorem}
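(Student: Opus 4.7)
The plan is to deduce the theorem directly from the two preceding lemmas, which together characterize the relationship between integrals of $s(x_0,\dotsc,x_d)=x_0 b(x_1,\dotsc,x_d)$ against measures $\pi \in \Gamma(\mu_0,\mu_1,\dotsc,\mu_d)$ and the $\alpha$-risk of $b_\#\gamma$, where $\gamma$ is the $(1,\dotsc,d)$-marginal of $\pi$. Lemma \ref{lem: monotone 2d solutions} provides the universal upper bound $\int s \,\dd\pi \leq \mathcal{R}_\alpha(b_\#\gamma)$, with equality exactly when $\tau_\pi$ has monotone increasing support, while Lemma \ref{lem: glueing} shows that for every $\gamma \in \Gamma(\mu_1,\dotsc,\mu_d)$ there exists at least one lift $\pi \in \Gamma(\mu_0,\mu_1,\dotsc,\mu_d)$ realizing equality. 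Together these two facts will identify the supremum of $\int s\,\dd\pi$ over $\pi$ with the supremum of $\mathcal{R}_\alpha(b_\#\gamma)$ over $\gamma$, and will pin down the optimizers.

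For the forward implication, suppose $\pi$ is optimal in \eqref{eqn: multi-marginal ot} with cost $s$, and let $\gamma$ be its $(1,\dotsc,d)$-marginal. Given any $\tilde\gamma \in \Gamma(\mu_1,\dotsc,\mu_d)$, I apply Lemma \ref{lem: glueing} to produce a competitor $\tilde\pi \in \Gamma(\mu_0,\mu_1,\dotsc,\mu_d)$ whose $(1,\dotsc,d)$-marginal is $\tilde\gamma$ and for which $\tau_{\tilde\pi}$ has monotone increasing support. By Lemma \ref{lem: monotone 2d solutions} the latter condition gives $\int s \,\dd\tilde\pi = \mathcal{R}_\alpha(b_\#\tilde\gamma)$, while optimality of $\pi$ and another application of the same lemma give
\[
\mathcal{R}_\alpha(b_\#\gamma) \geq \int s \,\dd\pi \geq \int s \,\dd\tilde\pi = \mathcal{R}_\alpha(b_\#\tilde\gamma).
\]
This shows simultaneously that $\gamma$ is optimal in \eqref{eqn: max alpha risk} (since $\tilde\gamma$ was arbitrary) and, specializing to $\tilde\gamma = \gamma$, that $\int s\,\dd\pi = \mathcal{R}_\alpha(b_\#\gamma)$, which by the equality case of Lemma \ref{lem: monotone 2d solutions} forces $\tau_\pi$ to have monotone increasing support.

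For the reverse implication, assume $\gamma$ is optimal in \eqref{eqn: max alpha risk} and $\tau_\pi$ has monotone increasing support. The equality case of Lemma \ref{lem: monotone 2d solutions} yields $\int s\,\dd\pi = \mathcal{R}_\alpha(b_\#\gamma)$. For any competing $\tilde\pi \in \Gamma(\mu_0,\mu_1,\dotsc,\mu_d)$ with $(1,\dotsc,d)$-marginal $\tilde\gamma \in \Gamma(\mu_1,\dotsc,\mu_d)$, the inequality part of the same lemma together with optimality of $\gamma$ give $\int s\,\dd\tilde\pi \leq \mathcal{R}_\alpha(b_\#\tilde\gamma) \leq \mathcal{R}_\alpha(b_\#\gamma) = \int s\,\dd\pi$, so $\pi$ is optimal in \eqref{eqn: multi-marginal ot}.

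I do not anticipate any serious obstacle, since the two lemmas have already done the substantive work; the only subtlety is to make sure that in the forward direction one uses the gluing lemma to realize equality for an arbitrary target $(1,\dotsc,d)$-marginal rather than just for $\gamma$ itself, which is precisely what is needed to transfer optimality from the multi-marginal transport side to the spectral-risk side.
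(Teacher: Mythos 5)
Your proof is correct and follows essentially the same route as the paper's: both directions rest on the upper bound and equality characterization of Lemma \ref{lem: monotone 2d solutions} together with the lift constructed in Lemma \ref{lem: glueing}, with the same specialization $\tilde\gamma=\gamma$ to extract the monotone-support conclusion. The only (harmless) variation is in the converse, where you compare $\pi$ against an arbitrary competitor $\tilde\pi$ rather than against an assumed optimizer, a slight streamlining of the paper's argument.
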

\begin{proof}	
First, suppose that $\pi$ is optimal in \eqref{eqn: multi-marginal ot} and let $\gamma$ be its $(1,...,d)$-marginal.  Then for any other $\tilde \gamma \in \Gamma(,\mu_1,...,\mu_d)$, construct $\tilde \pi \in \Gamma(\mu_0,\mu_1,...,\mu_d)$ as in Lemma \ref{lem: glueing}.  We then have, by Lemma \ref{lem: monotone 2d solutions} and optimality of $\pi$
\begin{eqnarray*}
	\mathcal{R}_\alpha(b_\#\tilde \gamma) &=&\int_{X_0 \times X_1\times\dotsm\times X_d}s(x_0,...,x_d)\dd\tilde \pi(x_0,\dotsc,x_d) \\
	&\leq& \int_{X_0 \times X_1\times\dotsm\times X_d}s(x_0,\dotsc,x_d)\dd \pi(x_0,\dotsc,x_d) \\
	&\leq& \mathcal{R}_\alpha(b_\# \gamma).
\end{eqnarray*}
This establishes that $\gamma$ is optimal in \eqref{eqn: max alpha risk}.  Furthermore, we must have equality throughout if we choose $\tilde \gamma=\gamma$.  In particular, this implies that $\int_{X_0 \times X_1\times\dotsm\times X_d}s(x_0,\dotsc,x_d)\dd \pi(x_0,\dotsc,x_d) = \mathcal{R}_\alpha(b_\# \gamma)$, which, by the second assertion in Lemma \ref{lem: monotone 2d solutions}, implies that $\tau_\pi$ has monotone increasing support.

Conversely, assume that $\pi\in \Gamma(\mu_0,\mu_1,\dotsc,\mu_d)$, such that its $(1,\dotsc,d$)-marginal $\gamma$  is optimal in \eqref{eqn: max alpha risk} and $\pi$ couples $\mu_0$ and $b_\#\gamma$ monotonically.  Let now $\tilde \pi$ be an optimizer in \eqref{eqn: multi-marginal ot} and $\tilde \gamma$ its $(1,\dotsc,d)$-marginal.  We then have, using Lemma \ref{lem: monotone 2d solutions}, 
\begin{eqnarray*}
\mathcal{R}_\alpha(b_\#\gamma)&=&\int_{X_0\times X_1 \times \dotsm\times X_d} s(x_0,x_1,\dotsc,x_d)\dd\pi(x_0,x_1,\dotsc,x_d)\\ &\leq& \int_{X_0\times X_1 \times \dotsm\times X_d} s(x_0,x_1,\dotsc,x_d)\dd\tilde \pi(x_0,x_1,\dotsc,x_d) \\
&\leq& \mathcal{R}_\alpha(b_\#\tilde \gamma) 
\end{eqnarray*}	
Optimality of $\gamma$ in \eqref{eqn: max alpha risk} then implies that we must actually have equality; in particular 
$\pi$ is optimal in \eqref{eqn: multi-marginal ot} as desired.
\end{proof}



\begin{remark}\label{rem: partial transport is multi-marginal transport}
Returning to the Expected Shortfall Case, $\alpha = \alpha_{m_0} = \frac{1}{m_0} \one_{[1-m_0,1]}$, in view of Remark \ref{rem:: expected shortfall -partial transport},
Theorem~\ref{thm: equivalence with ot} shows that the optimal partial transport problem for the surplus $b(x_1,\dotsc,x_d)$
can be transformed into a multi-marginal transport problem for the surplus
$s(x_0,\dotsc,x_d)= x_0 b(x_1,\dotsc,x_d)$ and additional marginal $\mu_0=\alpha_\sharp\Leb_{[0,1]}$.  To the best of our knowledge, this equivalence between these two well studied mathematical problems has not been observed before in the optimal transport literature.  

In addition, this perspective, together with results in \cite{CaffarelliMcCann10,Figalli10,KitPass} allows us to immediately identify conditions under which the active (that is, the part that couples to $\alpha>0$) part of the optimal $\gamma$ in \eqref{eqn: max alpha risk} is uniquely determined and concentrates on a graph over $x_1$.  Furthermore, algorithms to compute the solution are readily available \cite{BenamouEtal2015, IgbidaNguyen2018}.

These questions will be revisited for more general spectral risk measures later on.
\end{remark}
We close this section with the following immediate consequence of Theorem \ref{thm: equivalence with ot} and a standard existence result in optimal transport theory.
\begin{corollary}
Suppose that $b$ is bounded above and upper-semicontinuous on $X_1 \times X_2 \times\dotsm\times X_d$.  Then there exists a maximizing $\gamma$ in \eqref{eqn: max alpha risk}.
\end{corollary}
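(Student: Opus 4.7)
The plan is to reduce the existence question to the existence of an optimizer for the multi-marginal optimal transport problem \eqref{eqn: multi-marginal ot}, using Theorem~\ref{thm: equivalence with ot}. With $X_0 = [\alpha(0),\alpha(1)]$, $\mu_0 = \alpha_\sharp\Leb_{[0,1]}$, and $s(x_0,x_1,\dotsc,x_d) = x_0\, b(x_1,\dotsc,x_d)$, I want to produce an optimizer $\pi^\star \in \Gamma(\mu_0,\mu_1,\dotsc,\mu_d)$ for \eqref{eqn: multi-marginal ot}; Theorem~\ref{thm: equivalence with ot} then gives that the $(1,\dotsc,d)$-marginal of $\pi^\star$ maximizes \eqref{eqn: max alpha risk}.

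To carry this out, I would first verify that the effective surplus $s$ is bounded above and upper semi-continuous on $X_0 \times X_1 \times \dotsm \times X_d$. Since the spectral function $\alpha$ is non-negative and monotone increasing, one has $x_0 \in [\alpha(0),\alpha(1)] \subseteq [0,\infty)$. Writing $M$ for an upper bound on $b$, the decomposition
\[
s(x_0,x_1,\dotsc,x_d) \;=\; x_0 M \;-\; x_0\bigl(M - b(x_1,\dotsc,x_d)\bigr)
\]
makes both required properties transparent: $s \leq \alpha(1)M$, and since $x_0 \geq 0$ is continuous while $M - b \geq 0$ is lower semi-continuous, the product $x_0(M-b)$ is lower semi-continuous, so $s$ is upper semi-continuous.

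With these properties in hand, the classical direct method for multi-marginal optimal transport applies. The marginals $\mu_i$ are probability measures on Polish spaces and hence tight, so $\Gamma(\mu_0,\dotsc,\mu_d)$ is weakly compact by Prokhorov's theorem, and the functional $\pi \mapsto \int s\, \dd\pi$ is upper semi-continuous under weak convergence since $s$ is upper semi-continuous and bounded above; a maximizer $\pi^\star$ therefore exists. I do not anticipate any serious obstacle here; the only mild care required is the verification of upper semi-continuity of $s$ in the absence of any sign assumption on $b$, which is precisely what the decomposition above handles.
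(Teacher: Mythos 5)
Your proposal is correct and takes essentially the same route as the paper: verify that $s(x_0,\dotsc,x_d)=x_0b(x_1,\dotsc,x_d)$ is bounded above and upper semi-continuous, obtain an optimal $\pi$ for \eqref{eqn: multi-marginal ot}, and pass to its $(1,\dotsc,d)$-marginal via Theorem \ref{thm: equivalence with ot}. The only difference is that the paper cites a standard existence result (Theorem 1.7 in \cite{santambook}) where you spell out the direct method and the upper semi-continuity of $s$ (using $\alpha\geq 0$) by hand.
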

\begin{proof}
Note that the boundedness of $\alpha$ implies that $s$ given by \eqref{eqn: effective surplus function} is bounded above and upper-semicontinuous on  $X_0\times X_1 \times X_2 \times\dotsm\times X_d$.  The existence of an optimal $\pi$ in \eqref{eqn: multi-dimensional problem} is then well known; see for example, Theorem 1.7 in \cite{santambook}.  The $(1,\dotsc,d)-$ marginal of $\pi$ then maximizes \eqref{eqn: max alpha risk} by Theorem \ref{thm: equivalence with ot}.
\end{proof}


\section{Solutions for one-dimensional assets and compatible payouts}
\label{section:1d}
We now turn our attention to the structure of maximizers in \eqref{eqn: max alpha risk} when the underlying variables are one dimensional.

Assume that each $x_i\in \mathbb{R}$ and each $\mu_i$ is supported on an interval, $X_i=[\underline x_i, \overline x_i]$, for all $i=1,\dotsc,d$.  We note that in our setting, the first marginal, $\mu_0$ is supported always on the interval $[\underline x_0, \overline x_0]:=[\alpha(0),\alpha(1)]$ with $\alpha(0) \geq 0$.  
\begin{lemma}
Suppose that $b$ is compatible and  monotone increasing in each $x_i \in S_+$ and monotone decreasing for each $x_i \in S_-$.   Then the $s$-comonotone coupling \eqref{eqn: monotone solution} is optimal for \eqref{eqn: multi-marginal ot} with surplus function given by \eqref{eqn: effective surplus function}.  The maximal value is 
\begin{equation}\label{eqn: optimal value}
\int_0^1\alpha(m)b(G_1(m),G_2(m),\dotsc,G_d(m))\dd m.
\end{equation}
Furthermore, if in addition the monotonicity of $b$ with respect to each argument is strict, the solution is unique on the support of $\alpha$; that is, any other solution $\bar \pi$ 
coincides with $\pi =(G_\alpha,G_1,\dotsc,G_d)_\#\Leb_{[0,1]}$ on $(0,\alpha(1)] \times  [\underline x_1, \overline x_1] \times \dotsm\times[\underline x_d, \overline x_d]$.  
\end{lemma}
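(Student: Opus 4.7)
The plan is to reduce to the multi-marginal optimal transport framework established in Section \ref{section:prelim}, by checking that the surplus $s(x_0,x_1,\dotsc,x_d)=x_0 b(x_1,\dotsc,x_d)$ is weakly compatible on $X_0\times X_1\times\dotsm\times X_d$, with $X_0=[\alpha(0),\alpha(1)]\subseteq[0,\infty)$. Since $x_0\geq 0$ throughout, multiplication by $x_0$ preserves signs of mixed partials: $\partial^2_{x_i x_j} s = x_0\,\partial^2_{x_i x_j} b$ for $i,j\geq 1$, $i\neq j$, while $\partial^2_{x_0 x_j} s = \partial_{x_j} b$ for $j\geq 1$ has the sign prescribed by the monotonicity hypothesis on $b$. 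With the partition $S_+^s = \{0\}\cup S_+^b$ and $S_-^s = S_-^b$ of $\{0,1,\dotsc,d\}$ (where $S_\pm^b$ witness the compatibility of $b$), the weak compatibility of $s$ follows from that of $b$. Applying Proposition \ref{prop: non unique monge state} then yields the optimality of the $s$-comonotone coupling in \eqref{eqn: multi-marginal ot}.

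For the value formula \eqref{eqn: optimal value}, I would use that $\alpha:[0,1]\to\mathbb{R}$ is non-decreasing, so $G_0 = F_{\mu_0}^{-1} = \alpha$ (exactly as in the proof of Lemma \ref{lem: ot formula for R}); the integral $\int s\dd\pi$ then computes directly from the pushforward definition of $\pi=(G_0,G_1,\dotsc,G_d)_\#\Leb_{[0,1]}$.

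The substance of the lemma lies in the uniqueness assertion under strict monotonicity of $b$. I would proceed by a swap argument: fix $j\in\{1,\dotsc,d\}$, say $j\in S_+^b$, and let $\bar\pi$ be any optimizer. If there existed support points $(x_0,x_1,\dotsc,x_d)$ and $(x_0',x_1',\dotsc,x_d')$ of $\bar\pi$ with $x_0,x_0'>0$, $x_0<x_0'$, and $x_j>x_j'$, then exchanging the $j$-th coordinate between these two points would produce a perturbation still in $\Gamma(\mu_0,\mu_1,\dotsc,\mu_d)$ whose integral against $s$ increases by $(x_0'-x_0)\bigl[b(\dotsc,x_j,\dotsc)-b(\dotsc,x_j',\dotsc)\bigr]>0$, the positivity coming from strict monotonicity of $b$ in $x_j$; this would contradict optimality of $\bar\pi$. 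An analogous argument handles $j\in S_-^b$. Consequently the two-fold marginal $\bar\pi_{0j}$ has the prescribed monotone support on $\{x_0>0\}$ and therefore coincides there with the unique monotone (respectively counter-monotone) coupling of $\mu_0$ and $\mu_j$; taken over all $j$, this pins down each $x_j$ as the deterministic function $G_j\circ F_{\mu_0}$ of $x_0$ on $(0,\alpha(1)]$, forcing $\bar\pi=\pi$ on $(0,\alpha(1)]\times X_1\times\dotsm\times X_d$.

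The main obstacle is making the swap argument watertight at the measure level rather than on two isolated support points, and handling any atoms that $\mu_0=\alpha_\#\Leb_{[0,1]}$ may carry on $(0,\alpha(1)]$ (arising from intervals where $\alpha$ is constant, as in the expected shortfall case). A clean way is to phrase the argument via the $c$-cyclical monotonicity of $\mathrm{supp}(\bar\pi)$ combined with a disintegration over $\mu_0$; alternatively, one can approximate $s$ by the strictly compatible $s_\epsilon$ used in the proof of Proposition \ref{prop: non unique monge state} and combine the uniqueness furnished by Theorem \ref{thm:monge state} with a limit argument that localizes to the set $\{x_0>0\}$.
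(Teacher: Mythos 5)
Your treatment of optimality and of the value formula is fine and matches the paper: check that $s(x_0,\dotsc,x_d)=x_0b(x_1,\dotsc,x_d)$ is (weakly) compatible because $x_0\geq 0$, apply Proposition \ref{prop: non unique monge state}, and compute $\int s\,\dd\pi$ from the pushforward, using that $\alpha$ is the quantile function of $\mu_0$.

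The uniqueness part, however, has a genuine gap at its central step. In your single-coordinate swap, exchanging $x_j$ and $x_j'$ between the two support points changes the integral by
\[
x_0'\bigl[b(x_1',\dotsc,x_j,\dotsc,x_d')-b(x_1',\dotsc,x_j',\dotsc,x_d')\bigr]-x_0\bigl[b(x_1,\dotsc,x_j,\dotsc,x_d)-b(x_1,\dotsc,x_j',\dotsc,x_d)\bigr],
\]
not by $(x_0'-x_0)\bigl[b(\dotsc,x_j,\dotsc)-b(\dotsc,x_j',\dotsc)\bigr]$: the two increments of $b$ in the $j$-th slot are evaluated at \emph{different} values of the remaining coordinates, and they coincide only if $b$ is additively separable in $x_j$. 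For a general compatible $b$ the sign of this quantity is not determined by strict monotonicity alone --- comparing the two increments via supermodularity would require knowing that the other coordinates of the two points are already co-ordered, which is precisely what you are trying to prove. So no contradiction with optimality is obtained, and the claimed comonotonicity of each two-fold marginal $\bar\pi_{0j}$ does not follow. The paper avoids this circularity by swapping \emph{all} coordinates simultaneously: after reducing to the supermodular case (Remark \ref{rem: supermodularity vs compatibility}), it takes coordinatewise maxima $x^+$ and minima $x^-$, invokes the $s$-monotonicity of the support \eqref{eqn: s-monotonicity}, and then uses the second-order partial-difference identity \eqref{eqn: partial difference}: supermodularity makes every term of the double integral nonnegative, hence all terms vanish, and the strictly positive cross terms with $x_0$ (namely $\partial^2 s/\partial x_i\partial x_0=\partial b/\partial x_i>0$) together with $x_0>\tilde x_0$ force $x_i^+=x_i$ for every $i$; the case $\tilde x_0=0$ is handled separately by dividing by $x_0$. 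This simultaneous max/min comparison plus the integral representation is the missing idea.

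Two further remarks on your proposed repairs. First, deducing that each $x_j$ is the deterministic function $G_j\circ F_{\mu_0}$ of $x_0$ from pairwise comonotonicity requires $\mu_0$ to be atomless on $(0,\alpha(1)]$; when $\alpha$ has flat parts (expected shortfall being the prototypical case) $\mu_0$ has atoms there and pairwise information does not determine the conditional coupling on a fiber, so this step needs the formulation the paper uses (monotonicity of the full support with respect to $x_0$) rather than two-fold marginals. Second, the fallback of approximating by the strictly compatible $s_\epsilon$ and passing to the limit cannot deliver uniqueness: limits of $s_\epsilon$-optimizers are $s$-optimal, but an arbitrary $s$-optimizer need not arise as such a limit, so uniqueness for $s_\epsilon$ does not transfer to $s$. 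That approximation is exactly what Proposition \ref{prop: non unique monge state} uses to get \emph{optimality}, and it is the reason the uniqueness assertion there is dropped.
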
 
Clearly full uniqueness of the optimal $\pi$ cannot hold if $\alpha=0$ on a set of positive measure, or, equivalently, $\mu_0(\{0\}) >0$; in this case, we can rearrange the part of the $(1,...,d)-$ marginal $\gamma$ of $\pi$ that couples to $\alpha=0$ in any way without affecting the value of $\int s\dd\pi$.   The preceding Lemma identifies conditions under which this is the only source of non-uniqueness, so that the optimal $\pi$ is uniquely determined on $(0,\alpha(1)] \times  [\underline x_1, \overline x_1] \times \dotsm\times[\underline x_d, \overline x_d]$.  

Note also that we are able to obtain uniqueness off the set where $\alpha =0$ despite the fact that the surplus function may not be strongly compatible.  This is done by exploiting an idea developed in \cite{pass2022monge} (see in particular Proposition 4.1  there); namely that one only needs strongly compatible interactions between certain key pairs of variables (rather than all of them).

\begin{proof}
The conditions on $b$ make the function $s$ compatible, and so Proposition \ref{prop: non unique monge state} implies the optimality of the $s$-comonotone $\pi$. 
Note that since $\alpha$ is in fact the quantile function for the first marginal $\mu_0 = \alpha_\sharp\Leb_{[0,1]}$, \eqref{eqn: optimal value} is exactly $\int s \dd\pi$.

Now, to prove the last assertion, we will take advantage of the fact that compatibility is equivalent to supermodularity up to a change of variables (recall Remark \ref{rem: supermodularity vs compatibility}).  After such a change of variables, our assumptions become that $b$ is supermodular, and strictly increasing in each argument, and each $G_i =F^{-1}_{\mu_i}$ is the quantile map of  $\mu_i$.

It will suffice to prove the following claim: for any optimizer $\bar \pi$, if $(x_0,x_1,\dotsc,x_d)$ and $(\tilde x_0,\tilde x_1,\dotsc,\tilde x_d)$ are both in the support of $\bar \pi$ and $x_0>\tilde x_0\geq 0$, then  $x_i \geq \tilde x_i$ for each $i$.  To see that this is sufficient, note that applying the claim for $\tilde x_0=0$ implies that the lowest $a$ portion of each mass, $\{x_i <G_i(a)\}$ must pair with $x_0=0$, where $a = \mu_0(\{0\})$. Applying the claim with $\tilde x_0 >0$ then  implies that the support of $\bar\pi$ on $(0,\alpha(1)] \times  [\underline x_1, \overline x_1] \times\dotsm\times[\underline x_d, \overline x_d]$ must be monotone increasing, which immediately implies the desired result.

To see the claim, we apply the $s$-monotonicity property found, in, for example \cite{Pass2012,carlier2003class}.  We define  $x^+_i=\max\{x_i,\tilde x_i\} $ and $x^-_i=\min\{x_i,\tilde x_i\} $.  We then have
\begin{equation}\label{eqn: s-monotonicity}
s(x^+_0,x^+_1,\dotsc,x^+_d)+	s(x^-_0,x^-_1,\dotsc,x^-_d) \leq s(x_0,x_1,\dotsc,x_d)+	s(\tilde x_0,\tilde x_1,\dotsc,\tilde x_d). 
\end{equation}
Note that $x_0^-=\tilde x_0$.  If $\tilde x_0=0$, then $s(x^-_0,x^-_1,\dotsc,x^-_d) =s(\tilde x_0,\tilde x_1,\dotsc,\tilde x_d)=0$, so that \eqref{eqn: s-monotonicity} becomes (after dividing by $x_0$ )
$$
b(x^+_1,\dotsc,x^+_d) \leq b(x_1,\dotsc,x_d)
$$
As each $x^+_i \geq x_i$ and $b$ is strictly monotone in each coordinate, this can only happen if each $x_i^+=x_i$, which is equivalent to the claim.  

On the other hand, if $\tilde x_0>0$, we follow the approach in the proof of Proposition 4.1 in \cite{pass2022monge}.  A straightforward calculation yields that
\begin{align}
&s(x^+_0,x^+_1,\dotsc,x^+_d)+s(x^-_0,x^-_1,\dotsc,x^-_d) - s(x_0,x_1,\dotsc,x_d)-s(\tilde x_0,\tilde x_1,\dotsc,\tilde x_d)\nonumber\\
&=\int_0^1\int_0^1\sum_{i \neq j=0}^d \dfrac{\partial^2 s(x(\theta,\phi))}{\partial x_i\partial x_j }(x_i^+-x_i)(x_j-x_j^-)\dd\theta \dd\phi \label{eqn: partial difference}
\end{align}
where \[x(\theta,\phi) = \phi[\theta x^+ +(1-\theta) x] +(1-\phi)[\theta \tilde x +(1-\theta) x^-].\]  Now, by supermodularity of $s$, each 
$ \dfrac{\partial^2 s(x(\theta,\phi))}{\partial x_i\partial x_j }\geq 0$, and as each $x_i^+-x_i \geq 0$ and $x_j-x_j^- \geq 0$, we have that \eqref{eqn: partial difference} is greater than or equal to $0$; by \eqref{eqn: s-monotonicity}, it must then be equal to $0$, which then implies that each individual term $ \dfrac{\partial^2 s(x(\theta,\phi))}{\partial x_i\partial x_j }(x_i^+-x_i)(x_j-x_j^-)$ in the sum must be $0$.  Now, strict monotonicity of $b$ with respect to each $x_i$ implies that when we take $j=0$, we have $\dfrac{\partial^2 s(x(\theta,\phi))}{\partial x_i\partial x_0 } =\dfrac{\partial b(x(\theta,\phi))}{\partial x_i}>0$, which, as  $x_0>\tilde x_0 =x_0^- $ by assumption, implies that each $x_i^+ = x_i$.  This completes the proof of the claim, and, consequently, the Lemma.



\end{proof}
Thanks to Theorem \ref{thm: equivalence with ot}, the preceding result then easily yields the following characterization of solutions to \eqref{eqn: max alpha risk}.  
\begin{theorem}\label{thm: optimizer is alpha risk}
Assume that $b$ is compatible,  monotone increasing in each $x_i \in S_+$ and monotone decreasing in each $x_i \in S_-$.  Then  $(G_1,\dotsc,G_d)_\#\Leb_{[0,1]}$ maximizes \eqref{eqn: max alpha risk} and the maximal value is given by \eqref{eqn: optimal value}.  If in addition the monotonicity is strict, then letting $a =\mu_0\{0\}$ any other maximizer $\gamma$ must couple the regions defined by	$G_i([0,a])$, and must coincide with the $b$-comonotone coupling $(G_1,\dotsc,G_d)_\#\Leb_{[0,1]}$ on $G_1([a,1]) \times G_2([a,1]) \times\dotsm\times G_d([a,1])$.
\end{theorem}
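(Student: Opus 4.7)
The plan is to derive this theorem as a direct consequence of the preceding Lemma combined with the equivalence established in Theorem \ref{thm: equivalence with ot}. First I would observe that the hypotheses of the preceding Lemma are met, so $\pi := (G_0,G_1,\dotsc,G_d)_\# \Leb_{[0,1]}$ with $G_0 = F_{\mu_0}^{-1} = \alpha$ is optimal for the multi-marginal problem \eqref{eqn: multi-marginal ot} with surplus \eqref{eqn: effective surplus function}, and $\int s\,\dd\pi$ equals \eqref{eqn: optimal value}. To transfer optimality back to \eqref{eqn: max alpha risk} via Theorem \ref{thm: equivalence with ot}, it suffices to verify that $\tau_\pi = \bigl(\alpha,\, b \circ (G_1,\dotsc,G_d)\bigr)_\# \Leb_{[0,1]}$ has monotone increasing support. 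This is immediate: $\alpha$ is non-decreasing as a spectral function, and $m \mapsto b(G_1(m),\dotsc,G_d(m))$ is non-decreasing under the monotonicity hypotheses (for $i \in S_+$, both $G_i$ and $b$ in $x_i$ are non-decreasing, and for $i \in S_-$, both are non-increasing, so each composition contributes a non-decreasing factor). Theorem \ref{thm: equivalence with ot} then yields optimality of the $(1,\dotsc,d)$-marginal $(G_1,\dotsc,G_d)_\# \Leb_{[0,1]}$ in \eqref{eqn: max alpha risk}, together with the value \eqref{eqn: optimal value}.

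For the uniqueness assertion, I would take any other maximizer $\tilde\gamma$ of \eqref{eqn: max alpha risk} and invoke Lemma \ref{lem: glueing} to produce a lift $\tilde\pi \in \Gamma(\mu_0,\mu_1,\dotsc,\mu_d)$ with $(1,\dotsc,d)$-marginal $\tilde\gamma$ such that $\tau_{\tilde\pi}$ has monotone increasing support. The converse direction of Theorem \ref{thm: equivalence with ot} then forces $\tilde\pi$ to be optimal in \eqref{eqn: multi-marginal ot}. Under strict monotonicity of $b$, the uniqueness clause of the preceding Lemma gives that $\tilde\pi$ agrees with $\pi$ on $(0,\alpha(1)] \times X_1 \times \dotsm \times X_d$.

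Since $\alpha$ is non-decreasing, the identity $a = \mu_0\{0\} = \Leb\{m : \alpha(m) = 0\}$ translates to $\alpha = 0$ on $[0,a]$ and $\alpha > 0$ on $(a,1]$, so the restriction $\pi|_{\{x_0>0\} \times X_1 \times \dotsm \times X_d}$ coincides with $(G_0,G_1,\dotsc,G_d)_\# \Leb_{(a,1]}$. Pushing forward to the $(1,\dotsc,d)$-coordinates, the portion of $\tilde\gamma$ obtained by disintegrating $\tilde\pi$ against $x_0$ and retaining only $x_0 > 0$ equals $(G_1,\dotsc,G_d)_\# \Leb_{(a,1]}$, which is concentrated on $G_1([a,1]) \times \dotsm \times G_d([a,1])$ and agrees there with the $b$-comonotone coupling; the complementary part couples the marginals $(G_i)_\# \Leb_{[0,a]}$ arbitrarily. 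I expect the principal subtlety, rather than a genuine obstacle, to lie in carefully tracking the correspondence between the $\{x_0 > 0\}$ region in the multi-marginal picture and the $G_i([a,1])$ regions after projection; once this bookkeeping is in place, the result is essentially a transcription of the preceding Lemma through Theorem \ref{thm: equivalence with ot}.
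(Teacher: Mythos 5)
Your proposal is correct and follows essentially the same route as the paper, which deduces the theorem precisely by combining the preceding Lemma (optimality and restricted uniqueness of the $s$-comonotone coupling for the lifted multi-marginal problem) with the equivalence of Theorem \ref{thm: equivalence with ot}, using Lemma \ref{lem: glueing} to lift an arbitrary maximizer. The only minor redundancy is your explicit check that $\tau_\pi$ has monotone increasing support: the forward direction of Theorem \ref{thm: equivalence with ot} already delivers optimality of the $(1,\dotsc,d)$-marginal from optimality of $\pi$ alone, though the verification you give is correct and does no harm.
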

Note that in the special case when $b$ is supermodular and increasing with respect to each argument, this asserts optimality of the comonotone coupling
Note that each  region $ G_i([0,a])$ is of mass $a$; it is an interval of the form $[\underline{x_i},G_i(a)]$ if $i \in S_+$ (corresponding to the lowest fraction of the mass) and of the form $[G_i(a),\overline{x_i}]$ if $i \in S_-$ (corresponding to the highest fraction of the mass).

\begin{example} Consider $b(x_1,\dotsc,x_d) = x_1 + \dotsb + x_d$.  This corresponds to the value of a portfolio composed of underlying assets $x_1,\dotsc,x_d$.  It is clearly weakly supermodular and therefore compatible, and strictly monotone increasing in each coordinate, so the $(1,...,d)$-marginal of the $\pi$ defined by \eqref{eqn: monotone solution} is  optimal by Theorem \ref{thm: optimizer is alpha risk}.

Explicitly, the optimal value in \eqref{eqn: max alpha risk} is:
$$ 
\mathcal{R}_\alpha(b_\#\gamma) = \int_0^1 \alpha(m)(F_{\mu_1}^{-1}(m)+ \dotsb+F_{\mu_d}^{-1}(m)) \dd m =\sum_{i=1}^d \mathcal{R}_\alpha(\mu_i), $$
which corresponds to coupling ``good'' and ``bad'' events (\ie with $x_i$ small or large, respectively) together.    

Note that this special case could alternatively be deduced by noting that spectral risk measures are sub-additive for all couplings, and additive for the comonotone coupling; see, for instance, \cite{Puccetti2013} for the expected shortfall case (arguments for other $\alpha$ are similar).
\end{example}

\begin{example} If $b(x_1,\dotsc,x_d) =- \sum_{1\leq i< j \leq d} (x_i - x_j)^2$ and if $\alpha \equiv 1$, problem \eqref{eqn: max alpha risk} is equivalent to 
the computation of Wasserstein barycenters \cite{Carlier_wasserstein_barycenter}, while with $\alpha
= \alpha_{m_0}$ we get the partial Wasserstein barycenter problem \cite{KitPass}. In
both cases, since $b$ is supermodular 
the answer can be calculated explicitly by Theorem \ref{thm: optimizer is alpha risk}.
\end{example}
\begin{example}
For $\alpha =1$, it is well known that optimal transport for the surplus function in the preceding example is equivalent to the same problem with $b(x_1,\dotsc,x_d) =|\sum_{i=1}^dx_i|^2$ (see, for example, \cite{gangbo1998optimal}).  Since the mean $M:=\int_{X_1 \times\dotsm\times  X_d}\sum_{i=1}^dx_i \dd\gamma(x_1,\dotsc,x_d) = \sum_{i=1}^d\int_{X_i}x_id\mu_i(x_i)$ of the sum $z=\sum_{i=1}^dx_i$ is completely determined by the marginals $\mu_i$ for any $\gamma \in \Gamma(\mu_1,\dotsc,\mu_d)$, \eqref{eqn: multi-marginal ot} is equivalent to maximizing the variance of the sum $z$ over $\gamma \in \Gamma(\mu_1,\dotsc,\mu_d)$.   Interpreting this sum as the losses of a portfolio comprised of assets with losses $x_i$, we note the variance is sometimes used in this setting as a measure of risk. One drawback to this approach is its symmetry; variance punishes  good outcomes (when the sum $\sum_{i=1}^dx_i$ is below the mean) as well as bad ones (when the sum $\sum_{i=1}^dx_i$ is above the mean).

Choosing $\alpha
= \alpha_{m_0}$ in \eqref{eqn: alpha risk} on the other hand measures only downside risk beyond a particular level.  Since the quadratic surplus is clearly supermodular, Theorem \ref{thm: optimizer is alpha risk} implies that the worse case scenario can be computed explicitly.

More generally, surplus functions of the form $b(x_1,\dotsc,x_d) =H(\sum_{i=1}^dx_i)$  for a convex function $H$ are supermodular so Theorem \ref{thm: optimizer is alpha risk} applies in this setting as well.  Note that the case $H(z) =\max(z-K,0)$ reflects the payout of a basket call option on the underlying assets.
\end{example}

Our last example requires more extensive explanation and so we devote a separate subsection to it.
\subsection{Sensitivity analysis and maximal river flow}\label{subsect: river example}
In this section we briefly describe a recurring example from \cite{iooss2015review}, used throughout that paper to illustrate issues in sensitivity analysis.  In that setting, one wants to understand the influence of the dependence structure (among other factors) between several contributing inputs on an output behavior.

We consider a simple model which involves the height of river at risk of flooding and compares it to the height of a dyke which protects industrial facilities.  The maximal annual overflow $S$ of a river is modelled by
\[
S=Z_\nu+\Bigg(\dfrac{Q}{BK_s\sqrt{\frac{Z_m-Z_\nu}{L}}}\Bigg)^{0.6}-H_d-C_b,
\]
where $\Bigg(\dfrac{Q}{BK_s\sqrt{\frac{Z_m-Z_\nu}{L}}}\Bigg)^{0.6}$ is the maximal annual height of the river and the variables $Q,K_s,Z_\nu,Z_m,H_d,C_b,L,B$ are  physical quantities whose values are modelled as random variables due to their variation in time and space, measurement inaccuracies, or uncertainty of their true values.  Their individual distributions are modelled in \cite{iooss2015review} (see Table 1 on p.4), and are all absolutely continuous with respect to Lebesgue measure.  The  $\alpha$-risk \eqref{eqn: alpha risk}, where the $x_i$ are the variables $Q,K_s,Z_\nu,Z_m,H_d,C_b,L,B$ and $b=S$ is the overflow, quantifies the risk of overflow.  In \cite{iooss2015review}, the variables were assumed to be independent, although other dependence structures are certainly possible; \eqref{eqn: max alpha risk} asks what is the maximal risk over all possible dependence structures. Notice then that the surplus function $b$ is compatible and satisfies the strict monotonicity with respect to each variable required in Theorem \ref{thm: optimizer is alpha risk}.  The $(1,...,d)$ marginal $\gamma$ of the $s$-comonotone solution $\pi$ defined in \eqref{eqn: monotone solution} is  optimal, and the unique optimizer on the support of $\alpha$, $\{\alpha >0\}$.  In this case, the $G_i$ corresponding to $Z_\nu, Q$ and $L$ are monotone increasing, while the $G_i$ corresponding to the other variables are decreasing.




They can be retrieved explicitly by computing the quantile functions associated to each marginals as described above.

\section{Higher dimensional assets}\label{section:hdim}
There is an analogous theory of multi-marginal optimal transport when the underlying variables lie in more general spaces.  Although in these cases it is generally not possible to derive explicit solutions as we did above, it is possible to prove analogous structural properties of optimal couplings, namely, that solutions are of Monge type (that is, concentrated on graphs over $x_1$), for certain surplus functions $s$ (see \cite{gangbo1998optimal} for an early result in this direction, for a particular $s$, and \cite{Kim&PassMongeSol} and \cite{Pass11} for general, sufficient conditions on $s$).

We present one such result below, when each $X_i \subseteq \mathbb{R}^n$, to illustrate how the theory can be adapted to the present setting.  For simplicity, we restrict our attention to the $d=2$ case.  The conditions we impose on $b$ are closely related to the local  differential condition found in \cite{Pass11}; however, the proof in \cite{Pass11} was restricted to the case when each marginal was supported on a space of the same dimension, whereas here our first marginal $\mu_0$, corresponding to $x_0=\alpha$, has one dimensional support while the other marginals are supported in $\mathbb{R}^n$.  We modify that proof to fit the present setting here. Similar results can be established for larger $d$, using similar arguments.  However, the conditions imposed on $b$ become more stringent for larger $d$, and more complicated to state.  

In what follows, we will assume that $b$ is $\mathcal C^2$; $D_{x_i}b(x_1,x_2) =(\frac{\partial b}{\partial x_i^1},\frac{\partial b}{\partial x_i^2},...,\frac{\partial b }{\partial x_i^n})$ represents the gradient of the function $b:\mathbb{R}^d \times \mathbb{R}^d \rightarrow \mathbb{R}$ with respect to the variable $x_i \in \mathbb{R}^n$, for $i=1,2$.  Similarly, $D^2_{x_1x_2}b(x_1,x_2) =\big(\frac{\partial^2 b}{\partial x_1^r \partial x_2^l}\big)_{rl}$ is the $n$ by $n$ matrix of mixed second order derivatives (each entry is a derivative with respect to one coordinate from each of $x_1$ and $x_2$).

\begin{theorem}\label{prop: higher dimensional assets monge sol}
Suppose that $d=2$, that the domains $X_1,X_2 \subseteq \mathbb{R}^n$ are compact and that $\mu_1$ is absolutely continuous with respect to Lebesgue measure.  Assume that $x_2 \mapsto D_{x_1}b(x_1,x_2)$ is injective for each fixed $x_1 \in X_1$, and that for each $(x_1,x_2) \in X_1 \times X_2$, $\det(D^2_{x_1x_2}b(x_1,x_2)) \neq 0$ and
\begin{equation}\label{eqn: multi-marginal differential condition}
D_{x_2}b(x_1,x_2)\cdot[D^2_{x_1x_2}b(x_1,x_2)]^{-1}D_{x_1}b(x_1,x_2) >0.
\end{equation}

Then the part of the solution to \eqref{eqn: max alpha risk} away from $\alpha =0$ concentrates on the graph of a function over $x_1$.  Furthermore, if  $|\{\alpha =0\}| =0$, the solution is unique.
\end{theorem}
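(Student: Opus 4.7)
The plan is to exploit the equivalence of Theorem \ref{thm: equivalence with ot}: the maximization \eqref{eqn: max alpha risk} reduces to the three-marginal optimal transport problem with surplus $s(x_0,x_1,x_2) = x_0 b(x_1,x_2)$ and marginals $\mu_0 = \alpha_\sharp \Leb_{[0,1]}$, $\mu_1$, $\mu_2$. I will show that on $\{x_0 > 0\}$ any optimal $\pi$ concentrates on a graph over $x_1$; projecting to the $(x_1,x_2)$ coordinates then gives the graphical structure of the optimizing $\gamma$ away from $\{\alpha = 0\}$. This is a \emph{twist on splitting sets} argument in the spirit of \cite{Pass11}, but since the first marginal is one dimensional the surplus cannot be ``twisted'' in the full variable $(x_0,x_2) \in \mathbb{R}\times \mathbb{R}^n$; I will therefore combine the pointwise first-order condition with $c$-cyclical monotonicity.

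Let $(u_0, u_1, u_2)$ be optimal potentials for \eqref{eqn:dualit-multi-ot}. Because $b \in \Class^2$ on the compact domains and $\alpha$ is bounded, $u_1$ arises as a $c$-transform of a uniformly Lipschitz family in $x_1$ and is therefore itself Lipschitz; absolute continuity of $\mu_1$ then ensures $\mu_1$-a.e.\ differentiability of $u_1$. At any support point $(x_0,x_1,x_2)$ with $x_0 > 0$ at which $u_1$ is differentiable, the equality $\sum_i u_i(x_i) = x_0 b(x_1,x_2)$ (together with the reverse global inequality) yields the first-order condition $D_{x_1} u_1(x_1) = x_0\, D_{x_1} b(x_1, x_2) =: v$.

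By injectivity of $x_2 \mapsto D_{x_1}b(x_1,x_2)$, the locus of $(t,x_2)$ with $t > 0$ solving $t\, D_{x_1} b(x_1, x_2) = v$ is a one-parameter curve $x_2(t) := (D_{x_1} b(x_1, \cdot))^{-1}(v/t)$. Differentiating in $t$ and using the invertibility of $D^2_{x_1 x_2} b$ gives $\dot x_2(t) = -t^{-1} [D^2_{x_1 x_2} b]^{-1} D_{x_1} b$, so
\[ \frac{\dd}{\dd t}\, b(x_1, x_2(t)) = -\frac{1}{t}\, D_{x_2} b(x_1,x_2(t)) \cdot [D^2_{x_1 x_2} b(x_1,x_2(t))]^{-1} D_{x_1} b(x_1,x_2(t)) < 0 \]
by \eqref{eqn: multi-marginal differential condition}. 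Hence $t \mapsto b(x_1, x_2(t))$ is strictly decreasing along this fiber.

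Finally, suppose that $(x_0, x_1, x_2)$ and $(\tilde x_0, x_1, \tilde x_2)$ both lie in $\spt(\pi)$ with $x_0 > \tilde x_0 > 0$; both satisfy the same first-order condition, so both lie on the above fiber, giving $b(x_1, x_2) < b(x_1, \tilde x_2)$. On the other hand, $c$-cyclical monotonicity of $\spt(\pi)$ applied to this pair (swapping only the $x_0$ coordinate) reads $\tilde x_0 b(x_1,x_2) + x_0 b(x_1,\tilde x_2) \leq x_0 b(x_1,x_2) + \tilde x_0 b(x_1,\tilde x_2)$, i.e.\ $(x_0 - \tilde x_0)(b(x_1, x_2) - b(x_1, \tilde x_2)) \geq 0$, contradicting the previous inequality. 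Thus $x_0 = \tilde x_0$, and the injectivity of $D_{x_1} b(x_1, \cdot)$ then forces $x_2 = \tilde x_2$, establishing that $\pi$ is concentrated on a graph over $x_1$ on $\{x_0 > 0\}$. When $|\{\alpha = 0\}| = 0$, no mass sits at $x_0 = 0$, and the standard convexity argument (the average of two optimal graphical plans is again optimal, hence graphical, forcing the graphs to coincide $\mu_1$-a.e.) yields uniqueness. The most delicate technical point I anticipate is extracting the pointwise first-order condition at $\mu_1$-a.e.\ $x_1$, which relies on the Lipschitz regularity of $u_1$ together with the absolute continuity of $\mu_1$.
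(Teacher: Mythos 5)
Your proposal is correct and follows essentially the same route as the paper: reduction to the three-marginal problem via Theorem \ref{thm: equivalence with ot}, dual potentials from \eqref{eqn:dualit-multi-ot} with $\mu_1$-a.e.\ differentiability of $u_1$, the envelope/first-order condition $Du_1(x_1)=x_0 D_{x_1}b(x_1,x_2)$ on the support, and the same implicit differentiation along the fiber showing that $t\mapsto b(x_1,x_2(t))$ is strictly decreasing under \eqref{eqn: multi-marginal differential condition}. The only difference is the closing step: the paper notes that $u_0$ may be taken $s$-conjugate, hence convex, so that $b(x_1,x_2(\alpha))\in\partial u_0(\alpha)$ must intersect the monotone increasing subdifferential at most once, whereas you rule out two support points over the same $x_1$ by a two-point coordinate-swap (monotonicity) inequality, $(x_0-\tilde x_0)(b(x_1,x_2)-b(x_1,\tilde x_2))\geq 0$, which follows directly from the potentials you already have (add the two equalities on the support and the two global inequalities); the two closings encode the same monotonicity information, yours being marginally more elementary since it avoids invoking conjugacy of $u_0$. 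Both your argument and the paper's share the same implicit technical point, namely that the fiber $\alpha\mapsto x_2(\alpha)$ must be defined and differentiable on the interval between the two values of $x_0$ being compared, so this is not a gap relative to the paper; the uniqueness argument at the end is identical.
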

\begin{remark}
The injectivity of $x_2 \mapsto D_{x_1}b(x_1,x_2)$ is known as the \emph{twist condition} in the optimal transport literature; it is well known that, together with the absolute continuity of $\mu_1$, it guarantees the Monge structure of the solution to the two marginal optimal transport problem with surplus $b$ (see, for example, \cite{santambook}).  The invertibility of $D^2_{x_1x_2}b(x_1,x_2)$ is frequently referred to as \emph{non-degeneracy}, and can be seen as a linearized version of the twist. Here we require additional hypotheses as we are dealing with the more sophisticated 3 marginal problem with surplus $s(\alpha, x_1,x_2) =\alpha b(x_1,x_2)$, which is equivalent to the spectral risk maximization \eqref{eqn: max alpha risk} by Theorem \ref{thm: equivalence with ot}.
\end{remark}
\begin{proof}  

By Theorem \ref{thm: equivalence with ot}, we can consider the multi-marginal optimal transport problem \eqref{eqn: multi-marginal ot} rather than \eqref{eqn: max alpha risk}.  Let $\pi$ solve  \eqref{eqn: multi-marginal ot} and $u_0,u_1,u_2$ the optimal solutions to the dual problem \eqref{eqn:dualit-multi-ot}.  Then $u_1$ is differentiable $\mu_1$ almost everywhere, by a standard argument, originally found in \cite{McCann2001}.  Fix an $x_1$ where $u_1$ is differentiable.  To show that the solution concentrates on a graph, we need to prove that there is only one $x_2$ and one non-zero $\alpha$ such that $(\alpha,x_1,x_2)$ is in the support of $\pi$.

At such points, the inequality constraint in \eqref{eqn:dualit-multi-ot}, together with the equality \eqref{eqn: opt condition} on the support of the optimizer and the envelope theorem implies, whenever $(\alpha,x_1,x_2)$ is in the support of the optimal $\pi$
$$
Du_1(x_1) =\alpha D_{x_1}b(x_1,x_2).
$$
Note that for fixed $x_1$, the twist condition implies that for $\alpha \neq 0$, this uniquely defines $x_2 =x_2(\alpha)$ as a function of $\alpha$ (that is, for each $\alpha$, there is at most one $x_2$ satisfying this equation).  Furthermore, we can differentiate implicitly with respect to $\alpha$ to obtain, 
$$
0=	D_{x_1}b(x_1,x_2(\alpha)) +D^2_{x_1x_2}b(x_1,x_2(\alpha)) D_\alpha x_2(\alpha), 
$$
or
\begin{equation}\label{eq:Dx2}
D_\alpha x_2(\alpha) = -[D^2_{x_1x_2}b(x_1,x_2(\alpha)) ]^{-1}D_{x_1}b(x_1,x_2(\alpha)).
\end{equation}
Now, it is well known (see for example \cite{gangbo1998optimal} or \cite{Pass11}) that the $u_i$ can be taken to be $s$-conjugate.  Explicitly, this means that $u_0$, the potential corresponding to $x_0=\alpha$, can be written as
$$
u_0(\alpha) = \sup_{x_1,x_2}[\alpha b(x_1,x_2) - u_1(x_1) -u_2(x_2)],
$$
and is therefore convex, as a supremum of affine functions.  If   $(\alpha,x_1,x_2)$ is in the support of the optimal $\pi$, by the optimality condition presented in section \ref{section:prelim}, we must have that $x_1,x_2$ attains the supremum above, so that $b(x_1,x_2)$ lies in the subdifferential of $u_0$ at $\alpha$:
\[
b(x_1,x_2) \in \partial u_0(\alpha).
\]
But we also have $x_2 =x_2(\alpha)$, so that 
\begin{equation}\label{eq:inclusion}
b(x_1,x_2(\alpha)) \in \partial u_0(\alpha).
\end{equation}
We claim that this can hold for at most one value of $\alpha$.  Indeed, differentiating $b(x_1,x_2(\alpha))$ with respect to $\alpha$  and using \eqref{eq:Dx2} yields
\[
\begin{split}
 D_\alpha b(x_1,x_2(\alpha))&=D_{x_2}b(x_1,x_2(\alpha))\cdot D_\alpha x_2(\alpha)\\
 &= -D_{x_2}b(x_1,x_2(\alpha))\cdot [D^2_{x_1x_2}b(x_1,x_2(\alpha)) ]^{-1}D_{x_1}b(x_1,x_2(\alpha)).   
\end{split}
\]
Under the given assumption, this quantity is negative, and therefore the left hand side in \eqref{eq:inclusion} is a strictly decreasing function of $\alpha$; it can therefore intersect the subdifferential $\partial u_0(\alpha)$, an increasing set valued function of $\alpha$, at most once.  The points $(\alpha, x_2(\alpha))$, where $\alpha$ is the point at which this intersection occurs, are then the only points such that $(\alpha,x_1,x_2)$ are in the support of $\pi$, which establishes that the solution $\gamma$ to \eqref{eqn: max alpha risk} concentrates on a graph.

Uniqueness then follows by a very standard argument; if $\pi_0$ and $\pi_1$ are both solutions, by linearity of the functional $\gamma_{1/2} =\frac{1}{2}[\gamma_0+\gamma_1]$ is also a solution.  The argument above implies that the supports of $\gamma_0$ and $\gamma_1$  concentrate on graphs $T_0$ and $T_1$ over $x_1$.  The  support of $\gamma_{1/2}$  then concentrates on the union of the graphs of $T_0$ and $T_1$.  However, since $\gamma_{1/2}$ is also a solution, the argument above implies that this set should concentrate on a single graph.  This is not possible unless $T_0 =T_1$, $\mu_1$ almost everywhere, in which case $\gamma_0$ and $\gamma_1$ coincide, establishing the uniqueness assertion.
\end{proof}
There are of course output functions $b$ for which the hypotheses in the preceding result do not hold, and in these cases solutions are often not of Monge type.   Alternatively, one can estimate the dimension of the support of the optimal $\pi$ in \eqref{eqn: multi-marginal ot} using the signature of the off diagonal part of the Hessian of $s(\alpha, x_1,...,x_d)=\alpha b(x_1,...,x_d)$, as in \cite{Passthesis} and \cite{Pass12}.  We illustrate this below with the following result in the $d=2$ case,  which is an immediate consequence of Theorem 3.1.3 and Lemma 3.3.2 in \cite{Passthesis}.
\begin{proposition}
Assume $X_1, X_2 \subseteq \mathbb{R}^n$ and choose any point $(\alpha, x_1,x_2) \in (0,\infty) \times X_1 \times X_2 $ where $D^2_{x_1x_2}b(x_1,x_2)$ is non-singular.  Let $\pi$ be optimal in \eqref{eqn: multi-marginal ot}.  Then in a neighbourhood of   $(\alpha, x_1,x_2)$ the support of $\pi$ is contained in a Lipschitz submanifold of dimension at most $n+1$.  Furthermore, if \eqref{eqn: multi-marginal differential condition} holds, the dimension is at most $n$.
\end{proposition}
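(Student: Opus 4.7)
I would apply Theorem 3.1.3 of \cite{Passthesis} directly to the multi-marginal problem \eqref{eqn: multi-marginal ot} with surplus $s(x_0,x_1,x_2)=x_0 b(x_1,x_2)$, which is equivalent to \eqref{eqn: max alpha risk} by Theorem \ref{thm: equivalence with ot}. That theorem asserts that the support of an optimizer is locally contained in a Lipschitz submanifold whose dimension is bounded by the maximum dimension of a totally isotropic subspace for the off-diagonal part of the Hessian of the surplus; Lemma 3.3.2 of \cite{Passthesis} then identifies this number as $\min(p,q)+z$, where $(p,q,z)$ is the signature of that off-diagonal Hessian. So the task reduces to a linear algebra computation.

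At the given point $(\alpha,x_1,x_2)$ with $\alpha>0$, the off-diagonal Hessian of $s$ is the $(2n+1)\times(2n+1)$ matrix
\[
M \;=\; \begin{pmatrix} 0 & (D_{x_1}b)^T & (D_{x_2}b)^T \\ D_{x_1}b & 0 & A \\ D_{x_2}b & A^T & 0 \end{pmatrix}, \qquad A := \alpha\,D^2_{x_1 x_2} b.
\]
Since $A$ is invertible, the central $2n \times 2n$ block $B = \begin{pmatrix} 0 & A \\ A^T & 0 \end{pmatrix}$ has signature exactly $(n,n)$ (its eigenvalues come in pairs $\pm\sigma_i$, where the $\sigma_i$ are the singular values of $A$). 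I would then invoke the Haynsworth inertia additivity formula relative to $B$, which gives $\operatorname{sig}(M) = \operatorname{sig}(B) + \operatorname{sig}(-CB^{-1}C^T)$ with $C=((D_{x_1}b)^T,(D_{x_2}b)^T)$. A direct calculation using the block form $B^{-1} = \begin{pmatrix} 0 & (A^T)^{-1} \\ A^{-1} & 0 \end{pmatrix}$ yields the scalar Schur complement
\[
-C B^{-1} C^T \;=\; -\tfrac{2}{\alpha}\,(D_{x_2}b)^T\,[D^2_{x_1 x_2}b]^{-1}\,D_{x_1}b.
\]

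Both assertions now follow by case analysis on the sign of this scalar. Depending on whether it is negative, positive, or zero, $M$ has signature $(n,n+1,0)$, $(n+1,n,0)$, or $(n,n,1)$, so the maximal isotropic dimension $\min(p,q)+z$ equals $n$, $n$, or $n+1$, respectively; taking the worst case gives the unconditional bound of $n+1$. Under the additional hypothesis \eqref{eqn: multi-marginal differential condition}, the Schur complement above is strictly negative (using $\alpha>0$), so the borderline $(n,n,1)$ case is excluded and the maximal isotropic dimension drops to $n$. The main step requiring care is simply checking that the non-degeneracy hypotheses of Pass's Theorem 3.1.3 apply at the chosen point; this reduces to the invertibility of $B$, which is immediate from the assumption that $D^2_{x_1 x_2}b$ is non-singular together with $\alpha>0$, and everything else is bookkeeping via the Haynsworth formula.
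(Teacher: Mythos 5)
Your overall route is the same as the paper's, which simply invokes Theorem 3.1.3 and Lemma 3.3.2 of Pass's thesis for the surplus $s(x_0,x_1,x_2)=x_0b(x_1,x_2)$; your block form of the off-diagonal Hessian, the inertia $(n,n,0)$ of the central block, and the Haynsworth/Schur computation giving the scalar $-\tfrac{2}{\alpha}\,D_{x_2}b\cdot[D^2_{x_1x_2}b]^{-1}D_{x_1}b$ are all correct. However, there is a genuine error in the step that converts the signature into a dimension bound. Pass's local structure result does not bound the dimension of the support by the maximal dimension of a \emph{totally isotropic} subspace, $\min(p,q)+z$. The $s$-monotonicity of the support forces difference vectors $v$ of nearby support points to satisfy $g(v,v)\geq 0$ (not $g(v,v)=0$), where $g$ is the off-diagonal Hessian; accordingly the bound is the maximal dimension of a subspace on which $g$ is positive semidefinite, namely $p+z$. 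A quick sanity check: for $b=-(x_1+\cdots+x_m)^2$ in one dimension the off-diagonal Hessian has signature $(m-1,1,0)$ and optimizers can fill the $(m-1)$-dimensional set $\{x_1+\cdots+x_m=\mathrm{const}\}$, whereas $\min(p,q)+z=1$.

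With the correct formula the three cases (Schur complement negative, positive, zero), i.e.\ signatures $(n,n+1,0)$, $(n+1,n,0)$, $(n,n,1)$, give bounds $n$, $n+1$, $n+1$. Your final numbers for the proposition happen to coincide (unconditional bound $n+1$; bound $n$ under \eqref{eqn: multi-marginal differential condition}), but your reasoning misidentifies which case the hypothesis must exclude: it is not only the degenerate signature $(n,n,1)$ but also $(n+1,n,0)$ that yields $n+1$. In particular, your argument would ``prove'' the stronger claim that the bound $n$ holds whenever $D_{x_2}b\cdot[D^2_{x_1x_2}b]^{-1}D_{x_1}b$ is merely nonzero, which is not what the cited theorem gives and is why \eqref{eqn: multi-marginal differential condition} is a strict sign condition rather than a non-vanishing condition. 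Replace the isotropic-subspace count by the positive-semidefinite (spacelike) count $p+z$ and the rest of your computation yields exactly the stated proposition.
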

For larger $d$, if each $X_i \subseteq \mathbb{R}^{n_i}$ lies in a space of dimension $n_i$, the results in \cite{Passthesis} and \cite{Pass12} imply that the support of $\pi$ will again be contained in a Lipschitz submanifold of dimension $m$, where $m$ depends on the mixed second derivatives of $b$ in an intricate way.  Generically, if each $n_i=n$ is the same, $m$ will lie between $n$ and $(d-1)n+1$.  Of course, the dimension of the support of the solution $\gamma$ to \eqref{eqn: max alpha risk} will be no larger than $m$, as by Theorem \ref{thm: equivalence with ot} $\gamma$ is the $(1,...,d)$-marginal of $\pi$.



\section{Stability}\label{section:stability}
Among other results, in \cite{ghossoub2020maximum} the authors show stability of the optimal value $\mathcal{R}_\alpha(b_\#\gamma)$ and measure $\gamma$ with respect to weak convergence of the marginals and $L^1$ convergence of the payoff function $b$.  Below, we show that similar results (under slightly different assumptions -- see Remark \ref{rem: stability comparison}) can be easily deduced by combining Theorem \ref{thm: equivalence with ot} with optimal transport theory.

In addition, we further leverage the connection to optimal transport to improve  the stability with respect to the marginals when the underlying assets are one dimensional and $b$ is compatible; in this case, 
we show that under appropriate conditions, $\mathcal{R}_\alpha$ in fact exhibits Lipschitz dependence on the marginals.

\begin{proposition}
Suppose that for each $i$ $\{\mu_i^k\}$ is a sequence of probability measures in $\mathcal{P}(X_i)$ converging weakly to $\mu_i$ as $k \rightarrow \infty$ and $\{b^k\}$ is a sequence of continuous output functions on $X_1 \times X_2 \times\dotsm\times X_d$ converging uniformly to a continuous $b$, such that $b$ is bounded above, that is $b \leq C$, and bounded below by a sum of integrable functions, that is, $b(x_1,...,x_d) \geq \sum_{i=1}^du_i(x_i)$ with each $u_i \in L^1(\mu_i)$.  Let $\alpha^k$ be a sequence of spectral functions, uniformly bounded $\alpha^k \leq M$, such that the associated spectral measure $\alpha^k_\#\Leb_{[0,1]}$ converges weakly to $\alpha_\#\Leb_{[0,1]}$, where $\alpha$ is bounded.  
Then
\[\max_{\gamma \in \Gamma(\mu^k_1, \mu^k_2,\dotsc,\mu^k_d)} \mathcal{R}_{\alpha^k}(b^k_\#\gamma) \rightarrow \max_{\gamma \in \Gamma(\mu_1, \mu_2,\dotsc,\mu_d)} \mathcal{R}_{\alpha}(b_\#\gamma).\]  
Furthermore, if $\gamma^k$ maximizes $\mathcal{R}_{\alpha^k}(b^k_\#\gamma)$ over $\Gamma(\mu^k_1, \mu^k_2,\dotsc,\mu^k_d)$, then the weak limit $\gamma$ of any weakly convergent subsequence of the $\gamma^k$ maximizes $\mathcal{R}_{\alpha}(b_\#\gamma)$ over $\Gamma(\mu_1, \mu_2,\dotsc,\mu_d)$.
\end{proposition}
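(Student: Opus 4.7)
The approach is to invoke the equivalence with multi-marginal optimal transport (Theorem \ref{thm: equivalence with ot}) and then apply the standard stability theory for that problem. By Theorem \ref{thm: equivalence with ot}, the $k$th spectral risk maximization equals the value of the multi-marginal optimal transport problem
\[
V^k := \max_{\pi\in\Gamma(\mu_0^k,\mu_1^k,\ldots,\mu_d^k)}\int s^k \dd\pi,\qquad s^k(x_0,\ldots,x_d) := x_0\, b^k(x_1,\ldots,x_d),
\]
with $\mu_0^k := \alpha^k_\#\Leb_{[0,1]}$, and similarly the limit value $V$ corresponds to surplus $s := x_0\, b(x_1,\ldots,x_d)$ and marginals $\mu_0,\mu_1,\ldots,\mu_d$. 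The hypotheses translate exactly into the data required: $\mu_i^k\rightharpoonup\mu_i$ for every $i=0,1,\ldots,d$, the supports of the $\mu_0^k$ are uniformly contained in $[-M,M]$ since $|\alpha^k|\leq M$, and the uniform convergence $b^k\to b$ upgrades to uniform convergence $s^k\to s$ on $[-M,M]\times X_1\times\cdots\times X_d$; the bounds on $b$ ensure that $s$ is bounded above by $MC$ and below by the integrable function $x_0\sum_i u_i(x_i)$.

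The next step is to invoke standard stability of multi-marginal optimal transport: under weak convergence of all marginals and uniform convergence of a continuous, bounded-above surplus with an integrable lower bound, the maximal values $V^k$ converge to $V$, and any weak limit point of a sequence of maximizers $\pi^k$ is a maximizer for the limit problem. Tightness of $\{\pi^k\}$ needed to extract such subsequences follows from tightness of each marginal sequence (Prokhorov). The usual argument splits into two inequalities: for $\limsup_k V^k\leq V$, extract $\pi^k\rightharpoonup\pi\in\Gamma(\mu_0,\ldots,\mu_d)$ and pass to the limit $\int s^k\dd\pi^k\to\int s \dd\pi$ via uniform convergence; for $\liminf_k V^k\geq V$, glue a limit optimizer $\pi$ with optimal couplings between $\mu_i^k$ and $\mu_i$ (e.g.\ by composing with disintegrations along the first marginal) to produce $\tilde\pi^k\in\Gamma(\mu_0^k,\ldots,\mu_d^k)$ with $\tilde\pi^k\rightharpoonup\pi$, whence $V^k\geq\int s^k\dd\tilde\pi^k\to V$.

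Finally, to translate back: if $\gamma^k$ is a maximizer for the $k$th spectral risk problem, Lemma \ref{lem: glueing} (applied with $b^k$ and the marginals $\mu_i^k$) supplies a lift $\pi^k\in\Gamma(\mu_0^k,\ldots,\mu_d^k)$ with $(1,\ldots,d)$-marginal $\gamma^k$ for which the associated measure $\tau_{\pi^k}$ has monotone support, so by Theorem \ref{thm: equivalence with ot} $\pi^k$ is optimal for the MMOT and $V^k=\mathcal{R}_{\alpha^k}(b^k_\#\gamma^k)$. Convergence of the optimal values is immediate, and if a subsequence $\gamma^k\rightharpoonup\gamma$, the corresponding tight sequence $\{\pi^k\}$ has (after a further subsequence) a weak limit $\pi\in\Gamma(\mu_0,\ldots,\mu_d)$ whose $(1,\ldots,d)$-marginal is $\gamma$; the stability step above makes $\pi$ optimal, so $\gamma$ is optimal by Theorem \ref{thm: equivalence with ot}. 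The main technical subtlety lies in the gluing construction of the $\tilde\pi^k$ when the Polish spaces $X_i$ are non-compact, which is handled by tightness of the weakly convergent marginal sequences together with the uniform bounds on $s^k$.
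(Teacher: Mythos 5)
Your first step---reducing to the multi-marginal problem with surplus $s^k(x_0,\ldots,x_d)=x_0\,b^k(x_1,\ldots,x_d)$ and extra marginal $\mu_0^k=\alpha^k_\#\Leb_{[0,1]}$, and lifting maximizers $\gamma^k$ to plans $\pi^k$ via Lemma \ref{lem: glueing} and Theorem \ref{thm: equivalence with ot}---is exactly the paper's. Where you diverge is the stability kernel: the paper adapts the $s$-cyclical monotonicity argument of Villani's Theorem 5.20 (the support of a weak limit of optimizers inherits cyclical monotonicity from the supports of the $\pi^k$, using only uniform convergence of $s^k$ and weak convergence of plans) and then invokes Griessler's theorem that cyclical monotonicity is sufficient for optimality in the multi-marginal setting; you instead attempt a direct two-sided value argument, with the upper inequality coming from weak limits of optimizers and the lower inequality from glued competitor plans.

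The gap is in your lower (liminf) step, and it matters because in your scheme the optimality of a weak limit point $\pi$ is deduced from the chain of inequalities on the optimal values (write $V^k$, $V$ for the optimal values of the approximating and limiting problems): $\int s\,\dd\pi\geq\limsup_k V^k\geq\liminf_k V^k\geq V$. You construct $\tilde\pi^k\in\Gamma(\mu_0^k,\ldots,\mu_d^k)$ with $\tilde\pi^k\rightharpoonup\pi$ (this part is fine on Polish spaces, e.g.\ gluing along couplings optimal for a bounded compatible metric), but you then assert $\int s^k\,\dd\tilde\pi^k\to\int s\,\dd\pi$. Uniform convergence $s^k\to s$ reduces this to $\int s\,\dd\tilde\pi^k\to\int s\,\dd\pi$, and weak convergence only yields the upper semicontinuity inequality for a continuous function bounded above; $s$ may be unbounded below, since the hypotheses give only $s\geq x_0\sum_i u_i(x_i)$ with $u_i\in L^1(\mu_i)$, with no integrability, let alone uniform integrability, of the negative parts $u_i^-$ with respect to the approximating marginals $\mu_i^k$. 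The tightness and the uniform bound on $s^k$ that you invoke at the end control only the upper side, so the step fails in the stated generality of non-compact $X_i$ (it is fine if the $X_i$ are compact, where $s$ is bounded). This is precisely what the paper's route avoids: cyclical monotonicity is a support-level property that survives the limit without any comparison of cost integrals along the sequence, and optimality of the limit plan then follows from Griessler's result, the integrable bounds on $b$ being needed only for the limiting problem. To repair your argument you would need either compactness, an additional uniform-integrability hypothesis on the lower bounds along $\{\mu_i^k\}$, or to replace the liminf-by-gluing step with the monotonicity argument.
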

\begin{remark}\label{rem: stability comparison}
This result should be compared with Proposition 7 in \cite{ghossoub2020maximum}; in that setting, stronger assumptions  (uniform Holder continuity) on the $b^k$ are required, although a weaker notion of convergence is imposed on them. In addition, we also obtain stability with respect to variations in the spectral function $\alpha$. 
\end{remark}
\begin{proof}
The proof follows almost immediately from Theorem \ref{thm: equivalence with ot} and a corresponding stability result in optimal transport; namely, \cite[Theorem 5.20]{Villani-OptimalTransport-09}.  Note that as we can take $X_0 =[0,M]$ to be bounded, uniform convergence of the $b^k$ implies uniform convergence of the corresponding $s^k$ defined by \eqref{eqn: effective surplus function}.

The only issue to note is that that \cite[Theorem 5.20]{Villani-OptimalTransport-09} applies only to the two marginal problem.  However, exactly the same argument as is used there applies to the multi-marginal case to show that the multi-marginal version of the $s^k$ -cyclic monotonicity property satisfied by the support of each $\pi^k$ implies that the support of the limit measure $\pi$ is $s$-cyclically monotone.  This then implies that $\pi$ is optimal in \eqref{eqn: multi-marginal ot} by \cite[Theorem 12]{Griessler18}.

\end{proof}

We now specialize to the case when the $x_i$ are one dimensional.  Our main result in this direction requires  the following lemma.  It is in fact a special case of \cite[Corollary 11]{Pichler}; we provide the short proof for the convenience of the reader.

\begin{lemma} \label{lem:R-lipschitz} If $\alpha \leq M$, then $\mu\in \Prob(\Rsp)\mapsto \mathcal{R}_\alpha(\mu)$ is $M$-Lipschitz for the $p$-Wasserstein distance,
\begin{equation}
\abs{\mathcal{R}_\alpha(\mu) - \mathcal{R}_\alpha(\nu)} \leq M \Wass_p(\mu,\nu).
\end{equation}
\end{lemma}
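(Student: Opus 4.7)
The plan is to express both $\mathcal{R}_\alpha(\mu) - \mathcal{R}_\alpha(\nu)$ and $\Wass_p(\mu,\nu)$ in terms of quantile functions on $[0,1]$, and then obtain the Lipschitz bound via a single application of Hölder's inequality.

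First I would recall that for probability measures on $\Rsp$, the $p$-Wasserstein distance admits the explicit quantile representation
\[
\Wass_p(\mu,\nu) = \left(\int_0^1 \abs{F_\mu^{-1}(m) - F_\nu^{-1}(m)}^p \dd m\right)^{1/p},
\]
a classical fact (see, e.g., Santambrogio's textbook \cite{santambook}). Combined with the definition \eqref{eqn: alpha risk}, this gives the identity
\[
\mathcal{R}_\alpha(\mu) - \mathcal{R}_\alpha(\nu) = \int_0^1 \bigl(F_\mu^{-1}(m) - F_\nu^{-1}(m)\bigr)\alpha(m) \dd m.
\]
Taking absolute values and applying Hölder's inequality with conjugate exponents $p$ and $q$ (with $1/p + 1/q = 1$) yields
\[
\abs{\mathcal{R}_\alpha(\mu) - \mathcal{R}_\alpha(\nu)} \leq \left(\int_0^1 \abs{F_\mu^{-1} - F_\nu^{-1}}^p \dd m\right)^{1/p}\left(\int_0^1 \alpha(m)^q \dd m\right)^{1/q}.
\]
Since $0 \leq \alpha \leq M$ on $[0,1]$, the second factor is bounded by $M$ (the bound holds for every $q \in [1,\infty]$, covering also the endpoint cases $p=1$, $q=\infty$ and $p=\infty$, $q=1$), and the first factor is precisely $\Wass_p(\mu,\nu)$, giving the claimed inequality.

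There is essentially no obstacle here: the only subtle point is justifying the quantile formula for $\Wass_p$, which however is a classical computation using the fact that $(F_\mu^{-1}, F_\nu^{-1})_\#\Leb_{[0,1]}$ is the comonotone coupling and that this coupling is optimal for any convex cost in one dimension — exactly the $d=2$, supermodular special case underlying Lemma \ref{lem: ot formula for R}. One should also note that $\mathcal{R}_\alpha(\mu)$ and $\mathcal{R}_\alpha(\nu)$ are finite whenever $\Wass_p(\mu,\nu) < \infty$ and at least one of them is finite, which is implicit in the statement.
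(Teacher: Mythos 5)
Your proof is correct and follows essentially the same route as the paper: the quantile representation $\Wass_p(\mu,\nu)=\nr{F_\mu^{-1}-F_\nu^{-1}}_{\LL^p([0,1])}$, the identity $\mathcal{R}_\alpha(\mu)-\mathcal{R}_\alpha(\nu)=\int_0^1 (F_\mu^{-1}-F_\nu^{-1})\alpha\,\dd m$, and a Hölder bound with $\nr{\alpha}\leq M$. The only difference is cosmetic: you pass through the conjugate exponent $q$ and bound $\nr{\alpha}_{\LL^q([0,1])}\leq M$, while the paper bounds directly by $\nr{\alpha}_\infty$; these are the same estimate.
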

\begin{proof} On $\Rsp$, the $p$-Wasserstein distance is given  by the $\LL^p$ distance between the inverse cumulative distribution functions
$ \Wass_p(\mu,\nu) = \nr{F_\mu^{-1} - F_\nu^{-1}}_{\LL^p([0,1])}.$ The
statement follows:
\begin{align*}
\mathcal{R}_\alpha(\mu) - \mathcal{R}_\alpha(\nu)
&= \int (F_\mu^{-1}(m) - F_\nu^{-1}(m))\alpha(m) \dd m  \\
&\leq \nr{F_\mu^{-1} - F_\nu^{-1}}_{\LL^p([0,1])} \nr{\alpha}_{\infty}.
\end{align*}
\end{proof}
\begin{proposition}\label{prop:continiuty} 
Assume that each $X_i \subset \mathbb{R}$ and that  the output function $b$ is weakly compatible, monotone increasing in each $x_i \in S_+$ and monotone decreasing in each $x_i \in S_-$.  If in addition $b$ is $K$-Lipschitz with respect
to $\nr{\cdot}_p$ on $\Rsp^d$ and $\alpha$ is  bounded by $M$, then:
\begin{equation}
\abs{\max_{\gamma\in \Gamma(\mu_1,\dotsc,\mu_d)} \mathcal{R}_\alpha(b_{\#}\gamma) - \max_{\gamma\in \Gamma(\nu_1,\dotsc,\nu_d)} \mathcal{R}_\alpha(b_{\#}\gamma)} \leq M K\left(\sum_i \Wass_p(\mu_i,\nu_i)\right)^{1/p}
\end{equation}
\end{proposition}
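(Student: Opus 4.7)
The plan is to combine the explicit description of maximizers in Theorem \ref{thm: optimizer is alpha risk} with the Lipschitz estimate of Lemma \ref{lem:R-lipschitz}, and then to bound the resulting Wasserstein distance between the two push-forward measures by a synchronized coupling built from the quantile maps. The role of the weak compatibility and monotonicity assumptions on $b$ is precisely to reduce the two maximization problems to a plain evaluation at an explicit measure on each side.

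First I would invoke Theorem \ref{thm: optimizer is alpha risk}: under the hypotheses on $b$, the $b$-comonotone coupling $\gamma^\mu := (G_1^\mu,\ldots,G_d^\mu)_\#\Leb_{[0,1]}$ attains the max on the $\mu$-side, where $G_i^\mu = F^{-1}_{\mu_i}$ for $i\in S_+$ and $G_i^\mu(m)=F^{-1}_{\mu_i}(1-m)$ for $i\in S_-$; likewise $\gamma^\nu$ for the $\nu$-side. Lemma \ref{lem:R-lipschitz} then gives
\[
\left|\max_{\gamma\in\Gamma(\mu_1,\ldots,\mu_d)}\mathcal{R}_\alpha(b_\#\gamma) - \max_{\gamma\in\Gamma(\nu_1,\ldots,\nu_d)}\mathcal{R}_\alpha(b_\#\gamma)\right| \leq M\,\Wass_p(b_\#\gamma^\mu,\, b_\#\gamma^\nu).
\]

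Next I would estimate the right-hand side by introducing the synchronized coupling of $b_\#\gamma^\mu$ and $b_\#\gamma^\nu$ obtained by pushing $\Leb_{[0,1]}$ forward under $m\mapsto (b(G^\mu(m)),\, b(G^\nu(m)))$, where $G^\mu := (G_1^\mu,\ldots,G_d^\mu)$ and analogously for $G^\nu$. Using this coupling together with the $\|\cdot\|_p$-Lipschitz property of $b$ yields
\[
\Wass_p(b_\#\gamma^\mu, b_\#\gamma^\nu)^p \leq \int_0^1 |b(G^\mu(m))-b(G^\nu(m))|^p\,\dd m \leq K^p\int_0^1 \|G^\mu(m)-G^\nu(m)\|_p^p\,\dd m,
\]
and expanding the $\ell^p$-norm coordinate by coordinate and invoking the one-dimensional identity $\Wass_p(\mu_i,\nu_i) = \|F^{-1}_{\mu_i}-F^{-1}_{\nu_i}\|_{\LL^p([0,1])}$ converts the last integral into $\sum_{i=1}^d \Wass_p(\mu_i,\nu_i)^p$. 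Chaining the three inequalities produces the announced bound.

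The argument is essentially a concatenation of results already in the paper, so there is no serious analytic obstacle. The only subtlety lies with the $S_-$ indices, where $G_i^\mu$ involves the reflection $m\mapsto 1-m$; the change of variables $m'=1-m$ preserves the $\LL^p([0,1])$ norm, so the Wasserstein identity for the $i$-th marginal still evaluates to $\Wass_p(\mu_i,\nu_i)$. I would also note that the right-hand side most naturally emerges from this proof as $MK\bigl(\sum_i \Wass_p(\mu_i,\nu_i)^p\bigr)^{1/p}$, which agrees with the stated form for $p=1$ and is presumably the intended reading for general $p$.
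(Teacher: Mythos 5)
Your proposal is correct and follows essentially the same route as the paper: invoke Theorem \ref{thm: optimizer is alpha risk} to replace both maxima by the comonotone couplings, use the synchronized coupling $(G^\mu,G^\nu)_\#\Leb_{[0,1]}$ together with the $K$-Lipschitz bound on $b$ and the one-dimensional quantile formula for $\Wass_p$, and finish with Lemma \ref{lem:R-lipschitz}. Your closing remark is also apt: the paper's own computation likewise yields $MK\bigl(\sum_i \Wass_p(\mu_i,\nu_i)^p\bigr)^{1/p}$, so the exponent in the displayed statement appears to be a typo rather than a gap in your argument.
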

\begin{proof}
By Theorem \ref{thm: optimizer is alpha risk}, the $(1,\dotsc,d) - $ marginals $\gamma =(G_1,\dotsc,G_d)_\#\Leb_{[0,1]}$ and $\tilde \gamma=(\tilde G_1,\dotsc,\tilde G_d)_\#\Leb_{[0,1]}$ of the $s$-monotone couplings $\pi$ and $\tilde \pi$ of the $\mu_i$ and $\nu_i$, respectively, achieve the suprema of $\mathcal{R}_\alpha$.  Therefore, we get that $(G_1,\dotsc,G_d, \tilde G_1,\dotsc,\tilde G_d)_\#\Leb_{[0,1]} \in \Gamma(\gamma,\tilde \gamma) \subset \mathcal{P}(\mathbb{R}^{2d})$, and its projection onto each $X_i \times X_i$  is $(G_i,\tilde G_i)_\#\Leb_{[0,1]}$, which is the optimal coupling between $\mu_i$ and $\nu_i$ for the cost function $|x_i -\tilde x_i|^p$ in the definition of the $p-$Wasserstein distance.  Therefore, setting $G=(G_1,\dotsc,G_d)$ and $\tilde G=(\tilde G_1,\dotsc,\tilde G_d)$, 
\begin{align*}
\Wass_p^p(b_\# \gamma, b_{\#}\tilde\gamma)
&\leq \int_0^1 (b(G(t)) - b(\tilde G(t)))^p \dd t\\
&\leq K^p \int \nr{G(t) - \tilde G(t)}_p^p \dd t\\
&= K^p \int \sum_i |G_i(t) - \tilde G_i(t)|^p \dd t \\
&= K^p \sum_i \Wass_p(\mu_i,\nu_i)^p 
\end{align*}
One concludes using Lemma~\ref{lem:R-lipschitz} on the Lipschitz continuity of $\mathcal{R}_\alpha$. 
\end{proof}


\section{Multidimensional measures of risk}\label{section:multidim}
We consider the framework in \cite{Ruschendorf2006} in which risk is measured in a multi-dimensional way.  Instead of a single output variable, we now have several, depending on the same underlying random variables; this corresponds to a vector valued output function $b:X_1\times\dotsm\times X_d\to\Rsp^n$.  A natural form of multi-variate risk measures on the distribution $b_\#\gamma$ of output variables is then the maximal correlation measure from \cite{Ruschendorf2006}, which is defined by 


\[ \mathcal{R}_\nu(b_{\#}\gamma)=\max_{\eta\in\Gamma(b_{\#}\gamma,\nu)} \int_{\mathbb{R}^n \times \mathbb{R}^n} z\cdot y\; \dd\eta \]
for some probability measure $\nu \in \mathcal{P}(\mathbb{R}^n)$.  We note that it was proven in \cite{ekeland2012comonotonic} that any strongly coherent multi-variate risk measure takes this form.
We consider the problem of maximizing $\mathcal{R}_\nu(b_{\#}\gamma)$ over all $\gamma \in\Gamma(\mu_1,\dotsc,\mu_d)$, where the $\mu_i$ as before represent the distributions of the underlying variables. 
Exactly as in Theorem \ref{thm: equivalence with ot}, one can show that this problem is equivalent to the multi-marginal problem
\begin{equation}\label{eqn: multi-dimensional problem}
\max_{\pi\in\Gamma(\nu, \mu_1,\dotsc,\mu_d)} \int b(x_1,\dotsc,x_d)\cdot y\;\dd\pi. 
\end{equation}
This problem is more challenging than the case of a scalar valued $b$; nonetheless, we are able to obtain the following results in particular cases.
\begin{proposition}
Suppose that the underlying variables $x_i$ are one dimensional.  Assume that $\nu$ is concentrated on a smooth curve, that is $\nu=f_\# \Leb_{[0,1]}$ with $f:[0,1]\to\Rsp^d$, where each component of $f$ is positive and monotone increasing,  and each component $b_j$ of $b=(b_1,b_2,\dotsc,b_n)$ is supermodular and monotone increasing in each $x_i$. Then $\Big((m,x_1,\dotsc,x_d) \mapsto (f(m),x_1,\dotsc,x_d)\Big)_\# \pi$ is optimal in \eqref{eqn: multi-dimensional problem},where $\pi=(\id,F_{\mu_1}^{-1},F_{\mu_2}^{-1},\dotsc,F_{\mu_d}^{-1})_\#\Leb_{[0,1]}$ is the comonotone coupling of $\mu_0, \mu_1,\dotsc,\mu_d$, where $\mu_0=\Leb_{[0,1]}$.
\end{proposition}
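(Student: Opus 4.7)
The plan is to pull back the first marginal $\nu$ through the parametrization $f$ and recast \eqref{eqn: multi-dimensional problem} as a standard $(d+1)$-marginal optimal transport problem on $[0,1] \times X_1 \times \dotsm \times X_d$ with a supermodular cost; the conclusion then follows directly from Proposition \ref{prop: non unique monge state}. Set $F(m,x_1,\dotsc,x_d) := (f(m),x_1,\dotsc,x_d)$ and define the lifted cost
\[ \tilde s(m,x_1,\dotsc,x_d) := b(x_1,\dotsc,x_d) \cdot f(m) = \sum_{j=1}^n b_j(x_1,\dotsc,x_d) f_j(m). \]

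The first step is to show that the lifted problem
\[ \max_{\tilde\pi \in \Gamma(\Leb_{[0,1]}, \mu_1, \dotsc, \mu_d)} \int \tilde s \, \dd\tilde\pi \]
has the same value as \eqref{eqn: multi-dimensional problem} and that $F$ sends optimizers to optimizers. For any admissible $\tilde\pi$, a direct change of variables gives $F_\#\tilde\pi \in \Gamma(\nu,\mu_1,\dotsc,\mu_d)$ with identical objective value. Conversely, for any $\sigma \in \Gamma(\nu,\mu_1,\dotsc,\mu_d)$, I would disintegrate $\Leb_{[0,1]} = \int \lambda^y \dd\nu(y)$ along $f$, with $\lambda^y$ supported in $f^{-1}(y)$, disintegrate $\sigma$ with respect to its first marginal into conditionals $\sigma^y$ on $X_1 \times \dotsm \times X_d$, and form the glued measure $\tilde\sigma := \int \lambda^y \otimes \sigma^y \dd\nu(y) \in \Gamma(\Leb_{[0,1]},\mu_1,\dotsc,\mu_d)$, which satisfies $F_\#\tilde\sigma = \sigma$ and realizes the same objective value as $\sigma$.

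Next, I would verify that $\tilde s$ is weakly supermodular on $[0,1] \times X_1 \times \dotsm \times X_d$. For distinct $i,k \in \{1,\dotsc,d\}$,
\[ \frac{\partial^2 \tilde s}{\partial x_i \partial x_k} = \sum_{j=1}^n f_j(m) \frac{\partial^2 b_j}{\partial x_i \partial x_k} \geq 0, \]
since each $f_j$ is positive and each $b_j$ is supermodular; and
\[ \frac{\partial^2 \tilde s}{\partial m \partial x_i} = \sum_{j=1}^n f_j'(m) \frac{\partial b_j}{\partial x_i} \geq 0, \]
since each $f_j$ is monotone increasing and each $b_j$ is monotone increasing in $x_i$. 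The non-smooth case is handled by the corresponding finite-difference inequalities (which also require only monotonicity and positivity of the factors), as noted in Remark \ref{rem: supermodularity vs compatibility}. Applying Proposition \ref{prop: non unique monge state} with $S_+ = \{0,1,\dotsc,d\}$, the $\tilde s$-comonotone coupling of $\Leb_{[0,1]}, \mu_1, \dotsc, \mu_d$ --- which, because the first marginal is already $\Leb_{[0,1]}$, is exactly $\pi = (\id, F_{\mu_1}^{-1}, \dotsc, F_{\mu_d}^{-1})_\# \Leb_{[0,1]}$ --- is optimal in the lifted problem. Pushing forward through $F$ produces $(f,F_{\mu_1}^{-1},\dotsc,F_{\mu_d}^{-1})_\# \Leb_{[0,1]}$, which is the candidate in the statement.

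The main technical subtlety lies in the gluing step, since $f$ need not be injective; however, the hypothesis $\nu = f_\# \Leb_{[0,1]}$ is precisely what is needed for the disintegration theorem to yield the measures $\lambda^y$, so this is a routine (if slightly notationally heavy) measure-theoretic construction. Beyond this, no genuine obstacle arises: the vector-valued, multidimensional maximal-correlation problem is reduced, via the lifting $F$, to a scalar, supermodular multi-marginal transport problem whose solution has already been characterized.
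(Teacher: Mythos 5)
Your proof is correct and follows essentially the same route as the paper: lift the problem to $\Gamma(\Leb_{[0,1]},\mu_1,\dotsc,\mu_d)$ via the parametrization $f$, observe that the cost $b(x_1,\dotsc,x_d)\cdot f(m)$ is supermodular in $(m,x_1,\dotsc,x_d)$ thanks to the positivity and monotonicity of the $f_j$ and the supermodularity and monotonicity of the $b_j$, and invoke Proposition \ref{prop: non unique monge state}. The only difference is that you spell out the equivalence of the lifted and original problems (pushforward through $F$ plus the disintegration/gluing step), which the paper treats as an immediate rewriting; this added detail is sound and harmless.
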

\begin{proof}
We rewrite the problem \eqref{eqn: multi-dimensional problem} above as  
\[ \max_{\pi\in\Gamma(\nu, \mu_1,\dotsc,\mu_d)} \int b(x_1,\dotsc,x_d)\cdot y\;\dd\pi =\max_{\pi\in\Gamma(\Leb_{[0,1]}, \mu_1,\dotsc,\mu_d)} \int b(x_1,\dotsc,x_d)\cdot f(m)\;\dd\pi.\]
The objective function $b(x_1,\dotsc,x_d)\cdot f(m) =\sum_{j=1}^d b_j(x_1,\dotsc,x_d)f_j(m)$ is now a supermodular function of the one dimensional variables $m,x_1,x_2,...,x_d$, and so Proposition \ref{prop: non unique monge state} implies the desired result.
\end{proof}
For more general, diffuse $\nu$, we are able to prove that the solution is of Monge form and unique, provided that $b$ is invertible.
\begin{proposition}
Assume that $\nu$ is absolutely continuous with respect to $n$ dimensional Lebesgue measure and  $b:X_1\times\dotsm\times X_d\to\Rsp^n$ is invertible. Then there exists a unique solution to \eqref{eqn: multi-dimensional problem}. Furthermore, it concentrates on a graph over $y$.
\end{proposition}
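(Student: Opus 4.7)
The plan is to exploit the fact that the surplus $s(y,x_1,\dotsc,x_d) = b(x_1,\dotsc,x_d)\cdot y$ in \eqref{eqn: multi-dimensional problem} is affine in $y$, so that the Kantorovich potential associated with the marginal $\nu$ serves as a Brenier-type potential whose gradient, combined with the invertibility of $b$, pins down the other coordinates uniquely. Existence of an optimal $\pi$ follows from standard multi-marginal results (see, for example, \cite{PassSurvey}), so the substance lies in the graph structure and the uniqueness.

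First, I would invoke the dual formulation \eqref{eqn:dualit-multi-ot}: given an optimal $\pi \in \Gamma(\nu,\mu_1,\dotsc,\mu_d)$, there exist Kantorovich potentials $u_0,u_1,\dotsc,u_d$ with $u_0(y)+\sum_{i=1}^d u_i(x_i) \geq b(x_1,\dotsc,x_d)\cdot y$ everywhere, with equality $\pi$-a.e. Without loss of generality one may replace $u_0$ by its $s$-transform
\[ u_0(y)=\sup_{(x_1,\dotsc,x_d)}\Bigl[b(x_1,\dotsc,x_d)\cdot y-\sum_{i=1}^d u_i(x_i)\Bigr], \]
which, being a supremum of affine functions of $y$, is convex and locally Lipschitz. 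By Rademacher's theorem $u_0$ is then differentiable Lebesgue-a.e., and therefore $\nu$-a.e. since $\nu \ll \Leb$.

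Next, at any differentiability point $y$ of $u_0$ and any $(y,x_1,\dotsc,x_d)\in\spt(\pi)$, the equality in the dual constraint says that $(x_1,\dotsc,x_d)$ attains the supremum defining $u_0(y)$; the envelope theorem then gives
\[ D_y u_0(y)=b(x_1,\dotsc,x_d). \]
Because $b$ is invertible, this uniquely determines the fibre above $y$: $(x_1,\dotsc,x_d)=b^{-1}(D_y u_0(y))$. Hence $\pi$ concentrates on the graph of the map $y\mapsto b^{-1}(D_y u_0(y))$ over $y$, proving the graphical structure.

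For uniqueness, I would use the standard convex-combination argument (as in the proof of Theorem \ref{prop: higher dimensional assets monge sol}): if $\pi_1$ and $\pi_2$ are both optimal, then so is $\tfrac12(\pi_1+\pi_2)$, and the previous step forces each of the three measures to concentrate on a single graph over $y$. This is compatible only if the graphs supporting $\pi_1$ and $\pi_2$ coincide $\nu$-a.e., whence $\pi_1=\pi_2$. The main delicate point is ensuring the regularity of the dual potential $u_0$ -- specifically, the availability of the $s$-transform representation and the resulting envelope identity; this is routine under compactness of the $X_i$ and continuity of $b$, but would need additional care if the domains were unbounded or if $b$ had weaker regularity.
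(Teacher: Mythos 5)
Your proposal is correct and follows essentially the same route as the paper: dual potentials, Lipschitz (here via the $s$-transform/convexity observation, in the paper via the standard McCann argument) and Rademacher differentiability transferred to $\nu$-a.e.\ points by absolute continuity, the envelope identity $Du_0(y)=b(x_1,\dotsc,x_d)$ combined with invertibility of $b$ to get the graph, and uniqueness by the convex-combination argument as in Theorem \ref{prop: higher dimensional assets monge sol}. No substantive differences to flag.
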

Note that the invertibility assumption on $b$ implies that the sum of the dimensions of the $X_i$ must be less than or equal to the dimension $n$ of $\nu$
\begin{proof}
Noting that the gradient of the surplus function in \eqref{eqn: multi-dimensional problem} with respect to $y$ is $b$, the invertibility condition on $b$  is the famous twist condition in optimal transport, and so this result follows by a minor refinement of a standard argument which we produce below. 

Letting $u_0(y),u_1(x_1),\dotsc,u_n(x_n)$ be the optimal solutions to the dual problem,  we note that as  $\sum_{i=0}^d u_i(x_i)\geq b(x_1,\dotsc,x_d)\cdot y$, with equality by \eqref{eqn: opt condition} in the support of any optimzer $\pi$, the envelope theorem implies, 
\begin{equation}\label{eqn: first order multi-dimensional}
	Du_0(y) =D_y(y\cdot b(x_1,\dotsc,x_d)) = b(x_1,\dotsc,x_d)
\end{equation}
wherever $u_0$ is differentiable, at points $(y,x_1,\dotsc,x_d)$ in the support of $\pi$.  A now standard argument, found in \cite{McCann2001}, implies that $u_0$ is Lipschitz and therefore differentiable Lebesgue almost everywhere by Rademacher's theorem; the differentiability holds $\nu$ almost everywhere by the absolute continuity of $\nu$.  Therefore, \eqref{eqn: first order multi-dimensional} holds on a set of full $\pi$ measure.  Invertibility of $b$ then means that this equation becomes $(x_1,\dotsc,x_d)=b^{-1}(Du_0(y))$.  Setting $T(y) = b^{-1}(Du_0(y))$, this means that $\pi$ concentrates on the graph of $T:\mathbb{R}^n \rightarrow \mathbb{R}^d$, as desired.

Uniqueness then follows exactly as in the proof of Theorem \ref{prop: higher dimensional assets monge sol}.
\end{proof}

\bibliographystyle{plain}
\bibliography{bibli}

\begin{thebibliography}{10}

\bibitem{Carlier_wasserstein_barycenter}
M.~Agueh and G.~Carlier.
\newblock Barycenters in the {W}asserstein space.
\newblock {\em SIAM J. on Mathematical Analysis}, 43(2):904--924, 2011.

\bibitem{Benamou2021}
Jean-David Benamou.
\newblock Optimal transportation, modelling and numerical simulation.
\newblock {\em Acta Numer.}, 30:249--325, 2021.

\bibitem{BenamouEtal2015}
Jean-David Benamou, Guillaume Carlier, Marco Cuturi, Luca Nenna, and Gabriel
  Peyr\'{e}.
\newblock Iterative {B}regman projections for regularized transportation
  problems.
\newblock {\em SIAM J. Sci. Comput.}, 37(2):A1111--A1138, 2015.

\bibitem{brenier1989least}
Yann Brenier.
\newblock The least action principle and the related concept of generalized
  flows for incompressible perfect fluids.
\newblock {\em Journal of the American Mathematical Society}, 2(2):225--255,
  1989.

\bibitem{buttazzo2012optimal}
Giuseppe Buttazzo, Luigi De~Pascale, and Paola Gori-Giorgi.
\newblock Optimal-transport formulation of electronic density-functional
  theory.
\newblock {\em Physical Review A}, 85(6):062502, 2012.

\bibitem{CaffarelliMcCann10}
Luis~A. Caffarelli and Robert~J. McCann.
\newblock Free boundaries in optimal transport and {M}onge-{A}mp\`ere obstacle
  problems.
\newblock {\em Ann. of Math. (2)}, 171(2):673--730, 2010.

\bibitem{carlier2003class}
Guillaume Carlier.
\newblock On a class of multidimensional optimal transportation problems.
\newblock {\em Journal of convex analysis}, 10(2):517--530, 2003.

\bibitem{CarlierNazaret08}
Guillaume Carlier and Bruno Nazaret.
\newblock Optimal transportation for the determinant.
\newblock {\em ESAIM Control Optim. Calc. Var.}, 14(4):678--698, 2008.

\bibitem{cotar2013density}
Codina Cotar, Gero Friesecke, and Claudia Kl{\"u}ppelberg.
\newblock Density functional theory and optimal transportation with coulomb
  cost.
\newblock {\em Communications on Pure and Applied Mathematics}, 66(4):548--599,
  2013.

\bibitem{Ekeland05}
I.~Ekeland.
\newblock An optimal matching problem.
\newblock {\em ESAIM Control. Optim. Calc. Var.}, 11:57--71, 2005.

\bibitem{ekeland2012comonotonic}
Ivar Ekeland, Alfred Galichon, and Marc Henry.
\newblock Comonotonic measures of multivariate risks.
\newblock {\em Mathematical Finance: An International Journal of Mathematics,
  Statistics and Financial Economics}, 22(1):109--132, 2012.

\bibitem{Figalli10}
Alessio Figalli.
\newblock The optimal partial transport problem.
\newblock {\em Arch. Ration. Mech. Anal.}, 195(2):533--560, 2010.

\bibitem{FollmerSchied2002}
Hans F\"{o}llmer and Alexander Schied.
\newblock {\em Stochastic finance}, volume~27 of {\em De Gruyter Studies in
  Mathematics}.
\newblock Walter de Gruyter \& Co., Berlin, 2002.
\newblock An introduction in discrete time.

\bibitem{gangbo1998optimal}
Wilfrid Gangbo and Andrzej Swiech.
\newblock Optimal maps for the multidimensional monge-kantorovich problem.
\newblock {\em Communications on pure and applied mathematics}, 51(1):23--45,
  1998.

\bibitem{ghossoub2020maximum}
Mario Ghossoub, Jesse Hall, and David Saunders.
\newblock Maximum spectral measures of risk with given risk factor marginal
  distributions.
\newblock {\em To appear in Mathematics of Operations Research. Preprint
  available at arXiv:2010.14673}, 2020.

\bibitem{Griessler18}
Claus Griessler.
\newblock {$C$}-cyclical monotonicity as a sufficient criterion for optimality
  in the multimarginal {M}onge-{K}antorovich problem.
\newblock {\em Proc. Amer. Math. Soc.}, 146(11):4735--4740, 2018.

\bibitem{IgbidaNguyen2018}
Noureddine Igbida and Van~Thanh Nguyen.
\newblock Augmented {L}agrangian method for optimal partial transportation.
\newblock {\em IMA J. Numer. Anal.}, 38(1):156--183, 2018.

\bibitem{iooss2015review}
Bertrand Iooss and Paul Lemaître.
\newblock A review on global sensitivity analysis methods.
\newblock {\em Uncertainty management in simulation-optimization of complex
  systems}, pages 101--122, 2015.

\bibitem{Kim&PassMongeSol}
Young-Heon Kim and Brendan Pass.
\newblock A general condition for monge solutions in the multi-marginal optimal
  transport problem.
\newblock {\em SIAM Journal on Mathematical Analysis}, 46(2):1538--1550, 2014.

\bibitem{KitPass}
Jun Kitagawa and Brendan Pass.
\newblock The multi-marginal optimal partial transport problem.
\newblock {\em arXiv preprint arXiv:1401.7255}, 2014.

\bibitem{McCann2001}
Robert~J. McCann.
\newblock Polar factorization of maps on {R}iemannian manifolds.
\newblock {\em Geom. Funct. Anal.}, 11(3):589--608, 2001.

\bibitem{Passthesis}
Brendan Pass.
\newblock {\em Structural results on optimal transportation plans}.
\newblock PhD thesis, University of Toronto, 2011.

\bibitem{Pass11}
Brendan Pass.
\newblock Uniqueness and monge solutions in the multimarginal optimal
  transportation problem.
\newblock {\em SIAM Journal on Mathematical Analysis}, 43(6):2758--2775, 2011.

\bibitem{Pass12}
Brendan Pass.
\newblock On the local structure of optimal measures in the multi-marginal
  optimal transportation problem.
\newblock {\em Calculus of Variations and Partial Differential Equations},
  43(3-4):529--536, 2012.

\bibitem{Pass2012}
Brendan Pass.
\newblock Regularity of optimal transportation between spaces with different
  dimensions.
\newblock {\em Math. Res. Lett.}, 19(2):291--307, 2012.

\bibitem{PassSurvey}
Brendan Pass.
\newblock Multi-marginal optimal transport: theory and applications.
\newblock {\em arXiv preprint arXiv:1406.0026}, 2014.

\bibitem{pass2022monge}
Brendan Pass and Adolfo Vargas-Jim{\'e}nez.
\newblock Monge solutions and uniqueness in multi-marginal optimal transport:
  weaker conditions on the cost, stronger conditions on the marginals.
\newblock {\em arXiv preprint arXiv:2202.06783}, 2022.

\bibitem{Pichler}
Alois Pichler.
\newblock Evaluations of risk measures for different probability measures.
\newblock {\em SIAM J. Optim.}, 23(1):530--551, 2013.

\bibitem{Puccetti2013}
Giovanni Puccetti.
\newblock Sharp bounds on the expected shortfall for a sum of dependent random
  variables.
\newblock {\em Statistics \& Probability Letters}, 83(4):1227--1232, 2013.

\bibitem{PuccettiRuschendorf13}
Giovanni Puccetti and Ludger R\"{u}schendorf.
\newblock Sharp bounds for sums of dependent risks.
\newblock {\em J. Appl. Probab.}, 50(1):42--53, 2013.

\bibitem{Ruschendorf2006}
Ludger R\"{u}schendorf.
\newblock Law invariant convex risk measures for portfolio vectors.
\newblock {\em Statist. Decisions}, 24(1):97--108, 2006.

\bibitem{Ruschendorf2013}
Ludger R\"{u}schendorf.
\newblock {\em Mathematical risk analysis}.
\newblock Springer Series in Operations Research and Financial Engineering.
  Springer, Heidelberg, 2013.
\newblock Dependence, risk bounds, optimal allocations and portfolios.

\bibitem{santambook}
Filippo Santambrogio.
\newblock {\em Optimal transport for applied mathematicians}.
\newblock Progress in Nonlinear Differential Equations and their Applications,
  87. Birkh\"auser/Springer, Cham, 2015.
\newblock Calculus of variations, PDEs, and modeling.

\bibitem{Villani-TOT2003}
C.~Villani.
\newblock {\em Topics in Optimal Transportation}, volume~58 of {\em Graduate
  Studies in Mathematics}.
\newblock American Mathematical Society, Providence, 2003.

\bibitem{Villani-OptimalTransport-09}
C.~Villani.
\newblock {\em Optimal Transport: Old and New}, volume 338 of {\em Grundlehren
  der mathematischen Wissenschaften}.
\newblock Springer, 2009.

\bibitem{WangWang11}
Bin Wang and Ruodu Wang.
\newblock The complete mixability and convex minimization problems with
  monotone marginal densities.
\newblock {\em J. Multivariate Anal.}, 102(10):1344--1360, 2011.

\end{thebibliography}
\end{document}